\newtheorem{theorem}{\sc Theorem}[section]
\newtheorem{lemma}[theorem]{\sc Lemma}
\newtheorem{proposition}[theorem]{\sc Proposition}
\newcommand{\be}{\begin{equation}}
\newcommand{\ee}{\end{equation}}
\newcommand{\fl}[1]{\lfloor{#1}\rfloor}
\numberwithin{equation}{section} 
\def\bE{\mathbb{E}}
\def\bN{\mathbb{N}}
\def\bP{\mathbb{P}}
\def\bR{\mathbb{R}}
\def\bZ{\mathbb{Z}}
\begin{document}

\title[Soft Edge for Longest Increasing Paths ]{Soft edge results for longest increasing paths on the planar lattice}
\author[Nicos ~Georgiou]{Nicos Georgiou}
\address{Nicos Georgiou \\ University of Wisconsin-Madison\\ 
Mathematics Department\\ Van Vleck Hall\\ 480 Lincoln Dr.\\  
Madison WI 53706-1388\\ USA.}
\email{georgiou@math.wisc.edu}
\urladdr{http://www.math.wisc.edu/~georgiou}
\keywords{Bernoulli matching model, Discrete TASEP, soft edge, weak law of large numbers, last passage model, increasing paths} 
\subjclass[2000]{60K35} 
\date{\today}
\begin{abstract}
For two-dimensional last-passage time models of weakly increasing paths, interesting scaling limits have been proved for points close the axis (the hard edge). For strictly increasing paths of Bernoulli($p$) marked sites, the relevant boundary is the line $y=px$. We call this the soft edge to contrast it with the hard edge. We prove laws of large numbers for the maximal cardinality of a strictly increasing path in the rectangle $[\fl{p^{-1}n -xn^a}]\times[n]$ as the parameters $a$ and $x$ vary.  The results change qualitatively as $a$ passes through the value $1/2$. 
\end{abstract}
\maketitle

\section{introduction}

\textbf{Basic model.} Consider a collection of independent 
Bernoulli random variables $\{ X_v\}_{v\in\bZ^2}$ with  
$\bP(X_v=1)=p=1-q$ and interpret the event that $X_v=1$ as the event of having site $v$ as \textsl{marked}. For any rectangle $[m]\times[n] = \{ 1,2,...,m \}\times\{1,2,...,n\}$  we can define the random variable $L(m,n)$ that denotes the maximum possible number of marked sites that one can collect along a path  from $(1,1)$ to $(m,n)$ that is
 strictly increasing in both coordinates. It is possible that  there is more than one optimal path, and any such path is called a `Bernoulli longest increasing path (BLIP).' For example in Figure \ref{fig2} a longest increasing path is $\Pi =\{ (1,2), (2,3), (3,4), (5,5), (7,8)\}$.

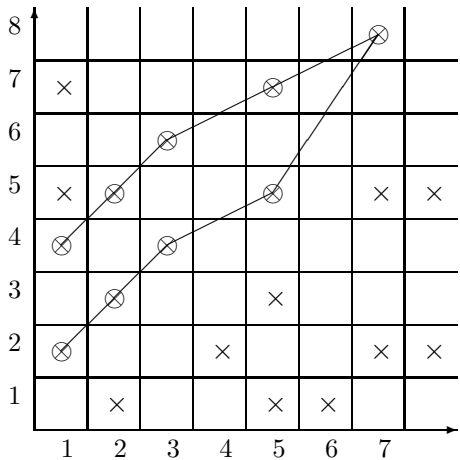
\begin{figure}[ht]
\begin{center}
\begin{picture}(200,160)(0,20)
\put(20,20){\vector(1,0){160}} 
\put(20,20){\vector(0,1){160}} 

\multiput(20,40)(0,20){7}{\line(1,0){160}}
\multiput(40,20)(20,0){7}{\line(0,1){160}} 

\put(30,10){\small 1} \put(50,10){\small 2}
\put(70,10){\small 3} \put(90,10){\small 4}
\put(110,10){\small 5} \put(130,10){\small 6}\put(150,10){\small 7}

\put(10,30){\small 1} \put(10,50){\small 2} \put(10,70){\small 3}
\put(10,90){\small 4} \put(10,110){\small 5}
\put(10,130){\small 6}\put(10,150){\small 7}\put(10,170){\small 8}

\put(26,47){$\otimes$}
\put(26,87){$\otimes$} \put(27,107){$\times$} \put(27,147){$\times$}
\put(47,27){$\times$} \put(107,27){$\times$} \put(127,27){$\times$} 

\put(46,67){$\otimes$} \put(46,107){$\otimes$} 
\put(87,47){$\times$} \put(167,47){$\times$} 

\put(66,87){$\otimes$} \put(66,127){$\otimes$} 
\put(107,67){$\times$} \put(147,47){$\times$} 

\put(106,107){$\otimes$} \put(106,147){$\otimes$} 
\put(147,107){$\times$} \put(167,107){$\times$} 

\put(146,167){$\otimes$}

{\linethickness{3pt}
\put(30,50){\line(1,1){40}} 
\put(70,90){\line(2,1){40}}
\put(110,110){\line(2,3){40}}
}

{\linethickness{3pt}
\put(30,90){\line(1,1){40}} 
\put(70,130){\line(2,1){40}}
\put(110,150){\line(2,1){40}}
}
\end{picture}
\end{center}
\caption{Two possible Bernoulli Longest Increasing paths in the rectangle $[7]\times[8]$. Bernoulli markings are denoted by $\times$. With the notation introduced, we have that $L(7,8) = 5.$\label{fig2}}
\end{figure}

It is easy to see that the random variables $-L(m,n)$ are subadditive. By Kingman's Subadditive Ergodic Theorem and some estimates to take care of integer parts, one can prove $n^{-1}L(\fl{nx},\fl{ny})\rightarrow \Psi(x,y)$ a.s. and in $L^1$. The function $\Psi(x,y)$ was completely determined in \cite{Sepp1}, using the hydrodynamic limit of a certain particle process and it is given by
\be
\Psi(x,y) =\left\{
\begin{array}{lll}

\vspace{0.1 in}
x, & \textrm {if } x < py \\

\vspace{0.1 in}
\displaystyle \frac{2\sqrt{pxy}-p(x+y)}{q}, & \textrm {if } p^{-1}y\geq x\geq py \\

\vspace{0.1 in}
y, &\textrm {if }  y < px 
\end {array}
\right.
\label{Timoiscool} 
\ee for all $(x,y)\in \bR_{+}^2$. There is a vast literature in statistical physics that studies this model as a simplified alternative to the hard \textsl{longest common subsequence} (LCS) model (see for example \cite{StatphysBM}, \cite{Sch}).

\medskip

\textbf{Connection with TASEP.} In \cite{Sch} the authors described a connection between $L(m,n)$ and a discrete totally asymmetric simple exclusion process (DTASEP). This connection converts $L(m,n)$ into a last passage problem of weakly increasing (up-right) paths. We explain this connection rigorously in section 3, in a somewhat simpler
 way than \cite{Sch}. 

To be more precise, we associate independent r.v.'s $Y_v$ to each point $v$ of $\bN^2$ and define the last passage time 
\be G(m,n) = \max_{\pi \in \Pi(m,n)}\sum_{v\in \pi}Y_v, \quad (m,n)\in \bN^2 \label{basicref}\ee where $\Pi(m,n)$ is the collection of all weakly increasing up-right paths in the rectangle $[m]\times[n]$ that start from $(1,1)$ and go up to $(m,n).$ If the start is not $(1,1)$ but a generic site $(k,l)\in \bN^2, k\leq m,l\leq n$, we define
\be G\left((k,l),(m,n)\right) = \max_{\pi \in \Pi\left((k,l),(m,n)\right)}\sum_{v\in \pi}Y_v,\ee with the obvious generalization of $\Pi\left((k,l),(m,n)\right)$ being the collection of all weakly increasing up-right paths in the rectangle $\left([k,m]\times[l,n]\right)\cap \bN^2,$ that start from $(k,l)$ and go up to $(m,n).$

In the case of i.i.d.\ random weights $\{Y_v\}_{v\in \bN^2}$, one can easily check that Kingman's subadditive ergodic theorem (e.g. \cite{Kallenberg} p.192) also applies for the double indexed r.v. \be \xi_{m,n}(x,y)= -G(\fl{nx}-\fl{mx},\fl{ny}-\fl{my})\ee assuming that $\bE{\xi_{0,1}^+}< + \infty$.  Hence, $n^{-1}G([nx],[ny])\longrightarrow \Phi(x,y).$ The function $\Phi$ has been completely determined in the case of i.i.d.\ geometric weights in \cite{Propp},\cite{Propp2} and i.i.d.\ exponential weights in \cite{Rost} (though the author did not use the last passage formulation), while proofs of both results using the hydrodynamic limits can be found in \cite{notonweb}. Both of these cases give
\be
\Phi(x,y) = (x+y)\bE Y_{(1,1)}+ 2\sqrt{\mathbb{V}ar(Y_{(1,1)})xy}.
\ee

\medskip

\textbf{Edge results.} Another interesting question is the behavior of the last passage time when one side of the rectangle is significantly smaller (of different order of magnitude) than the other. For
 i.i.d.\ weights with mean and variance  $1$ and exponential
tails, the following is true:
\begin{theorem}[\cite{GW},\cite{Sepp2-GW}]
Let $0<a<1$. Then, $\dfrac{G(n, \fl{xn^a}) - n}{n^{\frac{1+a}{2}}}\rightarrow 2\sqrt{x}$ in probability.
\label{GlynSepp}
\end{theorem}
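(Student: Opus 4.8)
The plan is to peel off the deterministic bulk, reduce $G$ to a variational problem over $k:=\fl{xn^a}$ independent one--dimensional random walks, couple those walks to Brownian motions, and recognize the limiting Brownian functional as a scaling of the top eigenvalue of a $k\times k$ GUE matrix, whose edge law of large numbers supplies the constant $2\sqrt x$. The guiding heuristic is that the excess $G(n,k)-n$ is, to leading order, a diffusive (Donsker--scale) functional of the weights --- of size $n^{(1+a)/2}=\sqrt n\,\sqrt{n^a}$ --- so only the first two moments of $Y$ survive in the limit, which is why the statement involves merely the mean and the variance.

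\emph{Step 1 (centering and a staircase formula).} Set $\tilde Y_v=Y_v-1$ and let $\tilde G$ be the last--passage time of the centered field. Every up--right path from $(1,1)$ to $(n,k)$ occupies exactly $n+k-1$ sites, so $G(n,k)=n+k-1+\tilde G(n,k)$, and since $n^{-(1+a)/2}(k-1)\to0$ when $a<1$ it is enough to prove $n^{-(1+a)/2}\tilde G(n,k)\to2\sqrt x$ in probability. I would encode an up--right path by the columns $0=:u_0\le u_1\le\cdots\le u_{k-1}\le u_k:=n$ at which it performs its successive up--steps, write $T_i(c):=\sum_{j=1}^{c}\tilde Y_{(j,i)}$ for the centered random walk along row $i$, and set $M_n:=\max_{v\in[n]\times[k]}\abs{\tilde Y_v}$. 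A direct count of the sites collected by such a path --- the only discrepancy being the $k-1$ weights at the corner columns --- gives
\be
\Bigl|\,\tilde G(n,k)\;-\;\max_{0=u_0\le u_1\le\cdots\le u_k=n}\ \sum_{i=1}^{k}\bigl(T_i(u_i)-T_i(u_{i-1})\bigr)\,\Bigr|\ \le\ k\,M_n .
\ee
Exponential tails give $M_n=O(\log n)$ with probability $1-o(1)$, so this gap is $O(k\log n)=o(n^{(1+a)/2})$ --- again exactly because $a<1$.

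\emph{Step 2 (Brownian coupling).} The increments of each $T_i$ have a finite exponential moment, so the Koml\'os--Major--Tusn\'ady strong approximation furnishes, on a common probability space, a standard Brownian motion $B_i$ with $\P\bigl(\max_{0\le c\le n}\abs{T_i(c)-B_i(c)}>C\log n\bigr)\le n^{-10}$, where $C$ depends only on the law of $Y$. Running this independently over the $k=\fl{xn^a}$ rows and taking a union bound, on an event of probability $1-o(1)$ the replacement of $T_i$ by $B_i$ changes each summand $T_i(u_i)-T_i(u_{i-1})$ by at most $2C\log n$, uniformly over partitions, hence changes the maximum in Step 1 by at most $2Ck\log n$; passing from the lattice maximum to the continuous one over real partitions $0=s_0\le\cdots\le s_k=n$ costs a further modulus--of--continuity term of order $k\sqrt{\log n}$. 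Brownian scaling $s_i=nt_i$ then gives, on an event of probability $1-o(1)$,
\be
\tilde G(n,k)=\sqrt n\,D_k+O(k\log n),\qquad D_k:=\max_{0=t_0\le t_1\le\cdots\le t_k=1}\ \sum_{i=1}^{k}\bigl(B_i(t_i)-B_i(t_{i-1})\bigr),
\ee
with $B_1,\dots,B_k$ independent standard Brownian motions on $[0,1]$.

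\emph{Step 3 (the constant and conclusion).} By the Baryshnikov--Gravner--Tracy--Widom identity, $D_k$ is distributed as the largest eigenvalue of a $k\times k$ GUE matrix in the normalization for which $D_1\sim N(0,1)$, so $k^{-1/2}D_k\to2$ in probability as $k\to\infty$ (the edge law of large numbers for Wigner matrices). Inserting $k=\fl{xn^a}$ into the display of Step 2 and using once more that $k\log n=o(n^{(1+a)/2})$ for $a<1$, we obtain $n^{-(1+a)/2}\tilde G(n,k)=\bigl(n^{-(1+a)/2}\sqrt n\,\sqrt k\bigr)\,(2+o_P(1))\to2\sqrt x$ in probability; combined with Step 1 this is the assertion.

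The main obstacle is the uniformity of the coupling in Step 2: the strong approximation has to be deployed simultaneously across all $\fl{xn^a}$ rows, which forces one to use the exponential tail of the Koml\'os--Major--Tusn\'ady error to survive a union bound over polynomially many rows, and one must then verify that the \emph{accumulated} coupling error, together with the corner error of Step 1 and the lattice--to--continuum error, is $o(n^{(1+a)/2})$ --- this bookkeeping is precisely what pins down the hypothesis $a<1$ and locates the threshold at which the behaviour changes. The GUE input in Step 3 is the only external ingredient: it can be replaced by a soft superadditivity argument (concatenating the functionals of disjoint blocks of the $B_i$'s over a partition of $[0,1]$) plus Gaussian concentration, which shows directly that $k^{-1/2}D_k$ converges to a finite constant, after which pinning that constant to $2$ is the remaining classical point.
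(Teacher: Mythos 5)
Your argument is sound, but note that the paper does not prove Theorem \ref{GlynSepp} at all: it is quoted from \cite{GW} and \cite{Sepp2-GW}, so there is no internal proof to compare against. Your route is essentially the historical one, reassembled with a modern shortcut. Step 1 (peeling off the $n+k-1$ bulk and the $O(kM_n)$ corner error) and Step 2 (row-by-row KMT coupling, surviving the union bound over $k=\fl{xn^a}$ rows via the exponential tail of the coupling error, then Brownian scaling to reduce to the functional $D_k$) reproduce the Glynn--Whitt reduction of $G(n,k)-n$ to Brownian last-passage percolation; the bookkeeping that all error terms are $o(n^{(1+a)/2})$ precisely when $a<1$ is correct, since $k\log n = n^a\log n = o(n^{(1+a)/2})$ iff $a<1$. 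Where you diverge from the cited literature is Step 3: \cite{GW} only obtained $k^{-1/2}D_k\to\alpha$ for an unidentified constant via superadditivity, and \cite{Sepp2-GW} pinned $\alpha=2$ by a hydrodynamic-limit argument for tandem queues; you instead invoke the Baryshnikov/Gravner--Tracy--Widom identity $D_k=_{\mathcal{D}}\lambda_{\max}(\mathrm{GUE}_k)$ together with the Wigner edge law. That identity postdates both cited papers, so your proof is not a reconstruction of theirs, but it is a legitimate and cleaner derivation of the constant; your closing remark correctly identifies it as the only nonelementary input and correctly describes the softer fallback (superadditivity plus Gaussian concentration gives existence of the limit, after which identifying it as $2$ is the genuinely hard classical point). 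The one place to be slightly more careful is the lattice-to-continuum step: the $O(k\sqrt{\log n})$ modulus-of-continuity bound must hold uniformly over all $k$ Brownian motions simultaneously, which costs an additional union bound but still only a logarithmic factor, so nothing breaks.
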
 This was proved in two stages: First, \cite{GW} proved the law of large numbers, namely, for $0<a<1$, $\dfrac{G(n, \fl{xn^a}) - n}{n^{\frac{1+a}{2}}}\rightarrow \alpha\sqrt{x}$ in probability. That $\alpha =2$ was proved in \cite{Sepp2-GW}.   

Then, in \cite{Mar1} (a similar result is obtained in \cite{ba1}) the authors prove a distributional limit for general weights, close to the edge:
\begin{theorem}[\cite{Mar1}]Suppose that $\bE|Y_v|^s < +\infty$, for some $s>2$. Let $\mu = \bE(Y_v)$ and $\sigma^2 = Var(Y_v)$. Then, for all $a$ such that  $0<a<\frac{6}{7}(\frac{1}{2}-s^{-1})$, $$ \frac{G(n, \fl{n^a})-n\mu-2\sigma n^{\frac{1+a}{2}}}{\sigma n^{\frac{3-a}{6}}}\Rightarrow F_{TW}.$$ In particular, if the weight distribution $Y_v$ has finite moments of all orders, then the theorem holds for all $a\in (0,\frac{3}{7})$.
\end{theorem}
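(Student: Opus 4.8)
The plan is to follow the route of \cite{Mar1,ba1}: since the short side $k=\fl{n^a}$ is far smaller than $n$, one can couple the discrete last-passage time $G(n,k)$ to an exactly solvable \emph{Brownian} last-passage time, whose law is that of the largest eigenvalue of a GUE random matrix, and then invoke the Tracy--Widom asymptotics for that eigenvalue. The coupling is where the moment hypothesis, and with it the precise range of $a$, enter.

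\textbf{Reduction to Brownian last passage.} Write $S_j(t)=\sum_{c=1}^{t}Y_{(c,j)}$ for the partial sums along row $j$ ($1\le j\le k$), with $S_j(0)=0$. A weakly increasing up-right path from $(1,1)$ to $(n,k)$ occupies in row $j$ a block of columns $[\alpha_j,\beta_j]$ with $\alpha_1=1$, $\beta_k=n$ and $\alpha_{j+1}=\beta_j$; putting $t_j=\beta_j$ (so $t_0=0\le t_1\le\cdots\le t_{k-1}\le t_k=n$) one obtains
\be
G(n,k)=\max_{0=t_0\le\cdots\le t_k=n}\ \sum_{j=1}^{k}\bigl(S_j(t_j)-S_j(t_{j-1})\bigr)\ +\ E_1,
\ee
where $E_1$ collects the $k-1$ weights $Y_{(t_{j-1},j)}$ coming from the columns shared by consecutive rows; since $\bE\abs{Y_v}^s<\infty$ one has $\bE\max_{c\le n}\abs{Y_{(c,j)}}=O(n^{1/s})$, so $\abs{E_1}=O_{\P}(n^{a+1/s})$. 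I would then place all rows on one probability space and apply a Koml\'os--Major--Tusn\'ady / Sakhanenko strong approximation to each: independent standard Brownian motions $B_1,\dots,B_k$ with $\bE\bigl[\max_{t\le n}\abs{S_j(t)-\mu t-\sigma B_j(t)}^{s}\bigr]=O(n)$, hence $\Delta_j:=\max_{t\le n}\abs{S_j(t)-\mu t-\sigma B_j(t)}$ obeys $\bE\Delta_j=O(n^{1/s})$. Replacing each $S_j$ by $t\mapsto\mu t+\sigma B_j(t)$ perturbs every path's value by at most $2\Delta_j$ in row $j$, so
\be
\bigl|\,G(n,k)-n\mu-\sigma D_{n,k}\,\bigr|\ \le\ 2\sum_{j=1}^{k}\Delta_j+\abs{E_1}+E_2\ =\ O_{\P}\bigl(n^{a+1/s}\bigr)+O_{\P}\bigl(n^{a}\sqrt{\log n}\,\bigr),
\ee
where $D_{n,k}:=\sup\bigl\{\sum_{j=1}^{k}(B_j(s_j)-B_j(s_{j-1})):0=s_0\le s_1\le\cdots\le s_{k-1}\le s_k=n\bigr\}$ and $E_2$, the cost of rounding the continuous maximizer to integer times, is controlled by the sum over rows of the scale-$1$ oscillation of $B_j$ on $[0,n]$, which is of order $k\sqrt{\log n}$.

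\textbf{Exact law, Tracy--Widom, and the range of $a$.} Brownian scaling gives $D_{n,k}\overset{d}{=}\sqrt{n}\,D_{1,k}$, and by the identity of Baryshnikov and of Gravner--Tracy--Widom (also O'Connell--Yor) $D_{1,k}$ has the law of the largest eigenvalue $\lambda_{\max}$ of a $k\times k$ GUE matrix, normalized so that $\lambda_{\max}/\sqrt{k}\to 2$. The Tracy--Widom limit theorem then says $k^{1/6}(\lambda_{\max}-2\sqrt{k})\Rightarrow F_{TW}$, so with $k=\fl{n^a}$ (the floor contributing only $o(n^{(3-a)/6})$) one has $D_{n,k}=2n^{(1+a)/2}+n^{(3-a)/6}\chi_n$ with $\chi_n\Rightarrow F_{TW}$. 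Dividing the second display above by $\sigma n^{(3-a)/6}$,
\be
\frac{G(n,\fl{n^a})-n\mu-2\sigma n^{(1+a)/2}}{\sigma n^{(3-a)/6}}=\chi_n+O_{\P}\bigl(n^{\,a+1/s-(3-a)/6}\bigr)+O_{\P}\bigl(n^{\,a-(3-a)/6}\sqrt{\log n}\,\bigr).
\ee
The last error is $o_{\P}(1)$ as soon as $a<3/7$; the middle one is $o_{\P}(1)$ precisely when $a+\tfrac1s<\tfrac{3-a}{6}$, i.e. $a<\tfrac{6}{7}\bigl(\tfrac12-\tfrac1s\bigr)$, which in turn forces $a<3/7$. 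On that range the left-hand side converges in distribution to $F_{TW}$, as claimed.

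\textbf{The main obstacle.} Everything after the coupling---the exact law of Brownian last passage and the Tracy--Widom asymptotics for the GUE---can be quoted off the shelf. The hard part is the strong-approximation step under only $s$ finite moments: the per-row error is of the comparatively large order $n^{1/s}$, and one must add up $\sim n^{a}$ of these (plus the overlap term $E_1$, which is of the same order) and still stay below the fluctuation scale $n^{(3-a)/6}$. It is exactly this accounting that forces the threshold $a<\tfrac{6}{7}(\tfrac12-\tfrac1s)$, and it is essential to use the $L^s$ (Sakhanenko-type moment) form of the approximation rather than merely the almost-sure $o(n^{1/s})$ rate, so that the union bound over the growing number of rows is efficient enough; a secondary nuisance is verifying that the overlap and rounding corrections are genuinely of lower order, which holds once $a<3/7$.
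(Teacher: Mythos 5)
This statement is quoted in the paper from \cite{Mar1} without proof, so there is no internal argument to compare against; your sketch correctly reconstructs the strategy of that reference (row decomposition of up-right paths, Sakhanenko-type $L^s$ strong approximation per row, the Baryshnikov/Gravner--Tracy--Widom identity for Brownian last passage, and the GUE Tracy--Widom limit). Your exponent accounting is also the right one: the condition $a+\tfrac1s<\tfrac{3-a}{6}$ is exactly $a<\tfrac{6}{7}\bigl(\tfrac12-\tfrac1s\bigr)$, and the rounding/overlap errors are indeed $o\bigl(n^{(3-a)/6}\bigr)$ on that range.
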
 
\noindent $F_{TW}$ is the Tracy-Widom distribution of the limiting largest eigenvalue of the GUE.   

We refer to these types of results as \textsl{hard} edge results. They are concerned with properties of the model close to the axis, in a very elongated and thin rectangle and this has an effect on the result. For example, in Theorem \ref{GlynSepp} the centering is exactly the expectation of a horizontal path.

On the other hand, the BLIP model
behaves trivially close to the axes because of the law of large numbers.  See the explanation 
after the theorems in Section \ref{resultsec}, that also indicates
why  $L(\fl{xn},\fl{yn})$ close to $x =yp^{-1}$ is the interesting edge
to consider.  We call this type of edge as the \textit{soft} edge, since the behavior of the model is not 
dictated by a boundary (e.g.\ the $x$-axis), but rather the model itself chose this edge as an appropriate one to change behavior. 
These soft edge
 results are quite different from the hard edge in the sense
 that in order to prove them, one uses central limit theorems.
The proofs depend heavily on the fact that there are many
 \textsl{independent} paths  that can be optimal, with high probability, while if we are restricted close to the axes this is no longer true.  This becomes more obvious by using the connection between the BLIP model and the DTASEP model.

\medskip

\textbf{Connections with some particle processes.} Increasing sequences on the planar lattice were first studied in \cite{Sepp1} using an interacting particle system. The discrete time totally asymmetric exclusion process (DTASEP) described in section 3 is connected to the particle system in \cite{Sepp1}. In \cite{Sepp1}, at time $t=0$ labeled particles start from initial configurations $\{(z_k(0),0)\}_{k\in \bZ}\subseteq \bZ\times\bZ_+ $. At each discrete time step $t$, the particles jump to the left to positions $\{(z_k(t),t)\}_{k\in \bZ}$, where $z_{k-1}(t-1)<z_k(t)\leq z_k(t-1)$ so that there are no Bernoulli marked sites on the segment $$S_k(t) =\{(x,t) : x \in \bZ, z_{k-1}(t-1)< x < z_k(t) \}$$ and $S_k(t)$ is maximal with that property. Notice that the cardinality $|S_k(t)| = \xi(k,t)\wedge(z_{k}(t-1)- z_{k-1}(t-1) - 1)$, where $\bP\{\xi(k,t)=s\} = pq^{s}.$

Suppose now that we start from the same initial configuration of particles $\{(w_k(0),0)\}_{k\in \bZ}$, $w_k(0)=z_k(0)$ for all $k$, but the particles now jump a geometrically distributed distance \textsl{to the right} and the position of the particle $k$ acts now as a block to particle $k-1$ satisfying the rule
\be w_{k-1}(t) = \left( w_{k-1}(t-1) + \tilde{\xi}(k,t)\right)\wedge\left( w_{k}(t-1)\right) \label{DiekerC},\ee where $\tilde{\xi}$ are i.i.d.\ $Geom(q)$, $\bP\{\tilde\xi(k,t)=s\} = pq^{s-1}$. We can couple the processes on the same lattice configuration. Then, at every time step, the site occupation is the same; if site $(x,t)$ was occupied by a particle in the first process, then the same site is occupied in this process and the positions of individual particles satisfy $w_{k-t}(t) = z_k(t).$ The particle process satisfying equation \eqref{DiekerC} is exactly the `geometric jumps with blocking' particle process (case C) that is described in \cite{Dieker} (although in this case the geometric random variables are shifted).  In the DTASEP process, this is precicely the movement of the `holes' that define a platoon.
 
\medskip

\textbf{Organization of this paper.} In section 2 we describe the main results and in section 3 we present the models used in the proofs that follow. In  section 4 we also make the connection between the BLIP model and DTASEP rigorous, in a way different than \cite{Sch}, that only involves induction. We then prove the key lemma
(Proposition \ref{relation})
 that we need for the theorems. In section 5 we prove the main theorems for the BLIP model. 

\medskip 

\textbf{Further notation and conventions.} We denote by $\bN = \{1, 2,...\}$ the set of positive integers and by $\bZ_+ = \{0, 1,... \}$ the set of non-negative integers. Throughout, because of the discrete nature of the jumps of the particle processes, we assume that the jump at time $t$ is completed at time $t$, while it has not yet occurred at time $t-$. We refer to the corner growth model with strictly positive geometric weights as the \textsl{standard} corner growth model. We use the notation $(a,b) \leq (x,y)$ for the partial order in $\bR^2$ for which $a\leq x$ \textsl{and} $b\leq y.$ Finally, as always, $C$ is a constant that changes from line to line.

\medskip

\textbf{Acknowledgments.} I would like to thank my advisor Timo Sepp\"{a}l\"{a}inen for many valuable discussions and suggestions, but most importantly for his patience throughout the preparation of this paper. I also thank two anonymous referees for their useful and constructive comments, for pointing out errors in the original version of this paper and for making the proofs of Proposition \ref{relation} and Theorem \ref{thm23} shorter and more elegant. 

\medskip

\section{Main results}  \label{resultsec}

We present here the main theorems and then comment
on why $[\fl{p^{-1}n - xn^a}] \times [n]$ is the interesting  edge
for the BLIP model. Recall that $L(m,n)$ is the maximum cardinality of Bernoulli marked sites one can collect from a strictly increasing path.

\begin{theorem}
{\rm (a)}
Let $x > 0$, $0 < a \leq \frac{1}{2}$, and $d_n > 0$ any sequence such that $d_n \rightarrow +\infty$ and $d_n = o(n)$. Then $$\frac{n- L(\fl{p^{-1}n-xn^a}, n)}{d_n} \rightarrow 0 \quad \textrm{in probability.}$$ 

{\rm (b)}
Let $x >0 $, $0< a \leq \frac{1}{2}$, and $d_n > 0$, $d_n = o(n),$ any sequence such that $\frac{d_n}{\log n} \rightarrow +\infty$. Then $$\lim_{n\rightarrow \infty}\frac{n- L(\fl{p^{-1}n-xn^a}, n)}{d_n} = 0 \quad a.s.$$ 

\label{basic}
\end{theorem}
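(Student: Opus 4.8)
The plan is to derive both parts from one quantitative bound: there are $C,c,\e_0>0$ (depending only on $p,x,a$) so that, for all large $n$ and all integers $1\le t\le\e_0 n$,
\be
\P\bigl(n-L(\fl{p^{-1}n-xn^a},n)\ge t\bigr)\ \le\ Ce^{-ct}.\label{expo}
\ee
Note $n-L(m,n)\ge0$ automatically, since $L(m,n)\le\min(m,n)=n$ once $m\ge n$. Granting \eqref{expo}, part (a) follows because for any $d_n\to\infty$ with $d_n=o(n)$ and any $\e>0$ we eventually have $1\le\e d_n\le\e_0 n$, hence $\P((n-L)/d_n\ge\e)\le\P(n-L\ge\ce{\e d_n})\le Ce^{-c\e d_n}\to0$. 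For part (b), if moreover $d_n/\log n\to\infty$, then eventually $\e d_n\ge(3/c)\log n$, so $\P((n-L)/d_n\ge\e)\le Cn^{-3}$ is summable; Borel--Cantelli gives $\limsup_n(n-L)/d_n\le\e$ a.s., and letting $\e\downarrow0$ through a countable set finishes part (b).

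To prove \eqref{expo} I would feed the event into Proposition \ref{relation}, which turns a question about $L$ into a question about the last-passage time $G$ of the corner growth model with strictly positive geometric weights, $\E Y_v=p^{-1}$, $\mathrm{Var}(Y_v)=qp^{-2}$. The conceptual point is that the \emph{soft} edge $x\approx p^{-1}y$ of the BLIP model is, under this correspondence, a \emph{hard}-edge (thin rectangle) regime of the corner growth model. Concretely, to get $L(\fl{p^{-1}n-xn^a},n)\ge n$ at all one must fit an increasing path through every one of the $n$ rows, and the greedy construction shows this happens exactly when a sum of $n$ i.i.d.\ geometric gaps is $\le\fl{p^{-1}n-xn^a}$; allowing $t-1$ of the rows to be skipped reduces the required width by a ``saving'' which, by Proposition \ref{relation}, is (up to centring) a geometric corner-growth maximum over a rectangle with one side of order $t$ and the other of order $n$, hence concentrated around a value of order $\sqrt{nt}$ (its hard-edge law of large numbers, the geometric analogue of Theorem \ref{GlynSepp}). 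Matching the size of this saving against the behaviour of \eqref{Timoiscool} at the corner,
\be
1-\Psi(p^{-1}-\delta,1)=\frac{p^{2}}{4q}\,\delta^{2}+O(\delta^{3})\qquad(\delta\downarrow0),\label{taylor}
\ee
which follows from \eqref{Timoiscool} by Taylor's theorem (note $\partial_x\Psi(p^{-1},1)=0$), with $\delta=xn^{a-1}$, pins the window: $n-L$ should concentrate around $\tfrac{p^{2}x^{2}}{4q}\,n^{2a-1}$. For $a<\tfrac12$ this is $o(1)$, so the entire statement is that $n-L$ stays below $d_n$; for $a=\tfrac12$ it equals the constant $\tfrac{p^{2}x^{2}}{4q}$, so $n-L$ is tight but need not vanish. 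This is exactly why $a=\tfrac12$ is the threshold.

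The main step is the tail bound itself. Writing $\{\,n-L(\fl{p^{-1}n-xn^a},n)\ge t\,\}$ as $\{\,S_{t}<R_n+xn^{a}+O(1)\,\}$, where $R_n$ is a centred sum of $n$ i.i.d.\ geometric gaps (so $\P(R_n>s\sqrt n)\le Ce^{-cs^{2}}$ by Cram\'er, using $a\le\tfrac12$) and $S_{t}$ is the width saved by optimally skipping $t-1$ rows, one reduces the estimate to showing that, for a suitable $s$ of order $\sqrt t$, $\P(S_{t}<s\sqrt n)$ is small. Since $\E S_{t}\asymp\sqrt{nt}$ and $S_{t}$, being a maximum over skip configurations, has a rapidly decaying lower tail, this is a left-tail large deviation of a last-passage maximum well below its mean; such bounds for geometric corner growth are classical and decay at least like $e^{-ct}$ (indeed like $e^{-ct^{2}}$ in the thin-rectangle regime). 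Choosing $s$ to balance $e^{-cs^{2}}$ against the lower tail of $S_{t}$ gives \eqref{expo}. The case $a=\tfrac12$ is identical, the term $x\sqrt n$ only shifting the value of $s$ at which the balance is achieved -- consistent with $n-L$ being tight but of constant size. (Alternatively one can avoid citing large deviations by exhibiting an explicit near-optimal increasing path that collects $n-t$ marks while saving width of order $\sqrt{nt}$, and applying Chernoff bounds to the truncated geometric increments along it.)

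The hardest part is making this quantitative input work uniformly in $n$: the hard-edge results quoted in the introduction are stated only as convergence in probability, so they cannot be quoted directly, and one needs a self-contained exponential lower-tail estimate for the geometric last-passage maximum on a thin rectangle, together with the slightly delicate fact that skipping $t$ rows genuinely saves width of the diffusive order $\sqrt{nt}$ and not something smaller -- a naive greedy argument (skipping rows one at a time, online) is too weak, and it is precisely here that Proposition \ref{relation} and the underlying last-passage structure do the real work. Keeping the constants explicit enough to cover the borderline $a=\tfrac12$, where the deterministic correction $\tfrac{p^{2}x^{2}}{4q}$ and the fluctuations are of the same order, is the final delicate point.
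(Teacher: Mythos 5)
Your overall architecture is the right one, and in fact coincides with the paper's: reduce everything to a single exponential tail bound $\P(n-L\ge t)\le Ce^{-ct}$ (the paper's version is $(1-\delta)^{t}$), from which (a) is immediate and (b) follows by Borel--Cantelli; and the reduction via Proposition \ref{relation} to the event $\{G(t,\,m-n+t)\le m+t-1\}$ for the geometric corner-growth model is exactly what the paper does. Your Taylor expansion of $\Psi$ and the identification of $a=\tfrac12$ as the threshold are also correct.

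However, there is a genuine gap at the one step that carries all the weight: the exponential lower-tail estimate itself. You reduce it to ``a left-tail large deviation of a last-passage maximum well below its mean'' and assert that ``such bounds for geometric corner growth are classical,'' while simultaneously (and correctly) observing that the hard-edge results you could quote are only in-probability statements and that ``one needs a self-contained exponential lower-tail estimate.'' That estimate is not supplied, and it is precisely the content of the paper's proof. Moreover your proposed decomposition $G=(\text{row-sum }R_n)+(\text{saving }S_t)$ is not a clean identity -- the two pieces are neither well-defined separately nor independent -- so the ``balancing'' of $e^{-cs^2}$ against the lower tail of $S_t$ cannot be carried out as stated. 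The missing argument is elementary and does not require any LDP input: in the rectangle $[t]\times[j]$ with $j=m-n+t$, bound $G(t,j)$ from below by the maximum over $i\le t$ of the path sums $S_{\pi_i}$ along the $t$ hook-shaped paths through column $i$; this yields
$\P\{G(t,j)\le c\}\le\prod_{i\le t}\P\{S_{(i,j)}\le c-t+1\}=\bigl(\P\{S_{(1,j)}\le c-t+1\}\bigr)^{t}$,
since the column sums $S_{(i,j)}$ are independent. A direct computation shows the threshold $c-t+1$ sits within $\tfrac{p}{q}xn^{a}=O(\sqrt n)$ of $\E S_{(1,j)}$ -- this is exactly where $a\le\tfrac12$ enters -- so each factor is at most $1-\delta$ by the central limit theorem, giving $(1-\delta)^{t}$. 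Without this (or an equivalent quantitative input), your proof of \eqref{expo} is an appeal to results you have not established, and the proposal is incomplete at its core.
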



\begin{theorem} For $\frac{1}{2}< a <1$ and $x \in \bR_+$, we have the following convergence in probability:
$$\frac{n - L(\fl{p^{-1}n-xn^a},n)}{n^{2a-1}}\longrightarrow\frac{(px)^2}{4q}$$ where $q=1-p$.
\label{thm23}
\end{theorem}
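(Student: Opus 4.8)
The plan is to reduce the BLIP statement to a last-passage statement for the standard corner growth model via Proposition \ref{relation}, and then to analyze that last-passage time by a fluctuation/large-deviation argument around the relevant characteristic. First I would unpack what the rescaled rectangle $[\fl{p^{-1}n - xn^a}]\times[n]$ corresponds to on the particle/DTASEP side: since $L(m,n)=n$ would mean the increasing path collects one marked site in every row, the deficit $n-L(\fl{p^{-1}n-xn^a},n)$ measures how many rows the optimal strictly-increasing path is forced to ``skip,'' and via the coupling of Section 3--4 this deficit equals (up to lower-order integer-part corrections) a last-passage time $G$ of geometric weights over a thin rectangle whose short side is of order $n^a$. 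Concretely, I expect the deficit to be comparable to $G(\fl{c\,n^{2a-1}},\, \fl{x' n^a})$-type quantities, or more precisely to a passage time governed by the line $\Psi(x,y)=x$ near $x=p^{-1}y$; the key input is that near the soft edge the function in \eqref{Timoiscool} is analytic and one Taylor-expands $\Psi$ to second order about $(p^{-1},1)$. Writing $x\mapsto \Psi(x,1)$ and expanding at $x=p^{-1}$, the linear term reproduces $x$ (the ``trivial'' axis behavior) and the first nonvanishing correction is quadratic, which is exactly what produces the $n^{2a-1}$ scale and the constant $(px)^2/(4q)$.

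The main steps, in order, would be: (1) Use Proposition \ref{relation} to write $n - L(\fl{p^{-1}n - xn^a}, n)$ exactly in terms of a last-passage time $G$ for $Geom(q)$ weights on an explicit rectangle with one side of order $n^a$ (call it $m_n\times\ell_n$ with $\ell_n\asymp n^a$ and $m_n$ determined by how far $p^{-1}n - xn^a$ sits below the characteristic). (2) Prove a law of large numbers for that last-passage time: show $G(m_n,\ell_n)/n^{2a-1}\to (px)^2/(4q)$ in probability. For this I would combine the shape theorem $n^{-1}G(\fl{ns},\fl{nt})\to \Phi(s,t) = (s+t)\bE Y + 2\sqrt{\mathbb{V}ar(Y)\,st}$ with the edge law of large numbers in the style of Theorem \ref{GlynSepp} (the Glynn--Whitt / Seppäläinen result), applied to geometric weights, to handle the regime where the rectangle is elongated (ratio of sides $\to\infty$, which happens precisely because $2a-1 < a$ for $a<1$). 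The constant is then pinned down by matching: $\Phi$ evaluated along the thin rectangle gives leading term $=$ (long side) $+ 2\sqrt{\mathbb{V}ar\cdot(\text{long})(\text{short})}$, and after subtracting the deterministic ``straight path'' contribution the surviving term has the claimed size; a short computation with $\mathbb{V}ar(Geom(q)) = p/q^2$ and $\bE = p/q$ yields $(px)^2/(4q)$.

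For the upper bound I would exhibit a near-optimal strictly increasing path explicitly — essentially a path that takes unit horizontal steps collecting marked sites and ``wastes'' horizontal room at a controlled rate — and estimate its number of collected marks by a sum of independent geometric increments, concluding by a weak law / Chebyshev argument that it collects at least $n - (1+o(1))(px)^2/(4q)\, n^{2a-1}$ marks. For the lower bound (i.e. the deficit is not smaller) I would argue that any strictly increasing path reaching column $\fl{p^{-1}n-xn^a}$ at row $n$ must, on average, use up more than one horizontal unit per collected mark by a quantitative central-limit/fluctuation estimate: the horizontal distance consumed to collect $k$ marks is a sum of $k$ i.i.d.\ $Geom$-type gaps with mean $p^{-1}$, so by the CLT the path cannot collect more than $n - x n^a + O(\sqrt{n})$... — here the point is that the $x n^a$ column deficit with $a > 1/2$ dominates the $\sqrt n$ Gaussian fluctuation, forcing a genuine quadratic cost, which is where the exact constant enters.

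The hard part will be step (2): making the edge/shape asymptotics uniform enough to identify the constant, because the rectangle $m_n\times \ell_n$ has both sides going to infinity but with a diverging aspect ratio that sits in the ``elongated'' regime, so neither the plain shape theorem (which needs both sides $\asymp n$) nor a naive single-path estimate is immediately sufficient; one must either invoke a sufficiently robust version of the Glynn--Whitt law of large numbers for geometric weights or re-derive the needed concentration by hand, and then carefully track how the $\fl{\cdot}$ in $\fl{p^{-1}n - xn^a}$ and the coupling of Proposition \ref{relation} interact so that the error terms are genuinely $o(n^{2a-1})$ rather than merely $o(n^a)$. I expect the sub- and super-solution (upper/lower path) arguments to require matching second-order terms, and that is where most of the technical care goes.
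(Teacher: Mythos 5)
Your overall skeleton --- reduce via Proposition \ref{relation} to a last-passage time of geometric weights, feed it into a Glynn--Whitt type edge law of large numbers, and match constants through the $2\sqrt{\mathbb{V}ar\cdot st}$ term --- is exactly the paper's route, and you correctly identify the edge LLN for an elongated rectangle as the crux. But the reduction is set up incorrectly, and the error propagates. Proposition \ref{relation} does not say the deficit \emph{equals} a passage time; it gives an equivalence of events, $\{n-L(m,n)\geq k\}=\{\tau(k,m-n+k)\leq n\}=\{G(k,m-n+k)\leq m+k-1\}$ (combine \eqref{defB} with \eqref{Timo}). With $m=\fl{p^{-1}n-xn^a}$ and $k\asymp n^{2a-1}$ the relevant rectangle is $k\times(m-n+k)$, whose long side is $m-n+k\sim (q/p)n$ --- of order $n$, not $n^a$. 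This matters quantitatively: the constant is found by solving $2\sigma\sqrt{k\cdot (q/p)n}=(px/q)\,n^a$ for $k$, and the square-root term has order $n^a$ only because the long side is of order $n$; with your rectangle $n^{2a-1}\times n^a$ the correction would be of order $n^{(3a-1)/2}\neq n^a$ and no constant emerges. Relatedly, your proposed intermediate statement ``$G(m_n,\ell_n)/n^{2a-1}\to (px)^2/(4q)$'' is not true for any rectangle arising here ($G$ itself is of order $n$); what one actually does is compare the in-probability expansion $G(j,yn^{2a-1})=p^{-1}n-n^{a}\bigl(q^{-1}(x-2\sqrt{qy})+o(1)\bigr)$ from \eqref{Martin} against the deterministic threshold $m+\fl{yn^{2a-1}}-1$, and both directions of the theorem drop out of the sign of $x-q^{-1}(x-2\sqrt{qy})$, i.e.\ of $y-(px)^2/(4q)$.

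The separate upper/lower path arguments you sketch would both fail. For the upper bound on the deficit, a single explicit increasing path whose horizontal consumption is a sum of $n$ independent geometric gaps concentrates at $p^{-1}n+O(\sqrt n)$; since $xn^a\gg\sqrt n$ for $a>1/2$, such a path exhausts its $\fl{p^{-1}n-xn^a}$ columns after about $n-pxn^a$ rows, so its deficit is of order $n^a$, not $n^{2a-1}$. Reaching deficit $\asymp n^{2a-1}$ requires optimizing over many candidate paths --- this is precisely what the $2\sigma\sqrt{st}$ term in \eqref{Martin} encodes --- and cannot be reproduced by one path plus Chebyshev. For the lower bound, the assertion that ``the path cannot collect more than $n-xn^a+O(\sqrt n)$'' is false (it would force a deficit of order $n^a$), and a per-path CLT does not control the maximum over the exponentially many admissible paths; one needs moderate-deviation estimates combined with a path-counting union bound, which is again exactly what the Glynn--Whitt/Sepp\"al\"ainen lemma packages. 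In short: keep the reduction and the edge LLN, fix the rectangle, and discard the bespoke path constructions --- both directions follow from the single comparison above.
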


From  (\ref{Timoiscool}) we see that $\Psi(p^{-1},1)=1$. This can be shown as follows. Create a maximal path
by the following patient strategy.
 Start checking the sites  $(1,1), (2,1), (3,1)...$ until the first
marked site $(X_{S_1},1)$. 
Then move up and start checking $(X_{S_1}+1,2),...$ up to the next marked site $(X_{S_2},2)$ and so on.
By the SLLN this strategy gives a path of $n-o(n)$ marked sites
in the rectangle $[\fl{p^{-1}n}]\times[n]$. 
Thus the behavior of 
 $L(m,n)$ is trivial for  $m > \fl{p^{-1}n}$  and this suggests
 $x=p^{-1}y$ as the interesting edge to look at.
Second, in \cite{Sch} they show, through an asymptotic analysis, the following 
\begin{theorem}[\cite{Sch}] For $py < x < p^{-1}y$, we have the following convergence in distribution:
$$\dfrac{q(xy)^{\frac{1}{6}}}{p^{\frac{1}{6}}}\dfrac{L(\fl{nx},\fl{ny}) - \Psi(nx, ny)}{n^{\frac{1}{3}}\left( \sqrt{x} - \sqrt{py} \right)^{\frac{2}{3}}\left( \sqrt{y} - \sqrt{px} \right)^{\frac{2}{3}}}\Longrightarrow F_{TW}.
$$\end{theorem}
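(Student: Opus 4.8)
I would not attempt a direct asymptotic analysis of $L$ itself; instead I would transport the known Tracy--Widom limit for the \emph{standard} (geometric) corner growth model through the exact correspondence of Proposition \ref{relation}. That proposition identifies the law of $L(m,n)$ with last-passage times $G$ of the geometric model, and along a fixed direction the two passage times are essentially inverse functions of one another. The entire proof is then a single inversion: establish a one-point Tracy--Widom limit for $G$, and read off the limit for $L$ by linearizing the shape function $\Psi$ at $(x,y)$. Because inverting a quantity that fluctuates on scale $N^{1/3}$ again fluctuates on scale $N^{1/3}$, with the limiting coefficient divided by the local slope, this mechanism automatically produces the cube-root scaling and converts the constants into derivatives of $\Psi$.

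\textbf{Geometric input.} First I would invoke Johansson's fluctuation theorem for the standard corner growth model: for a fixed aspect ratio $\gamma$ there are explicit constants such that
$$\frac{G(\fl{N\gamma},N) - N\,\Phi(\gamma,1)}{N^{1/3}\,\sigma(\gamma)}\Longrightarrow F_{TW},$$
where $\Phi$ is the geometric shape function appearing earlier and $\sigma(\gamma)$ is Johansson's fluctuation coefficient, of the form $\gamma^{-1/6}$ times a product of two factors each raised to the power $2/3$. This is exactly the regime in which the Schur/RSK determinantal structure of the geometric weights yields the GUE Tracy--Widom law, and I would take it as a black box supplying both the $N^{1/3}$ scale and the precise one-point coefficient.

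\textbf{Transfer and constants.} Setting $\ell = \Psi(nx,ny) + s\,n^{1/3}$, I would use Proposition \ref{relation} to rewrite $\bP\big(L(\fl{nx},\fl{ny}) \leq \ell\big)$ as the probability of a $G$-event in the geometric model whose threshold is pinned near $\fl{nx}$ and whose level is governed by $\ell$. A first-order Taylor expansion of the shape relation about $(x,y)$ then shows that a displacement $s\,n^{1/3}$ in $\ell$ corresponds to a displacement of the $G$-threshold equal to $s\,n^{1/3}$ times the reciprocal local slope. Differentiating the middle branch of $(\ref{Timoiscool})$ gives
$$\partial_x\Psi = \frac{\sqrt{p}}{q\sqrt{x}}\big(\sqrt{y}-\sqrt{px}\big),\qquad \partial_y\Psi = \frac{\sqrt{p}}{q\sqrt{y}}\big(\sqrt{x}-\sqrt{py}\big),$$
and it is precisely these two derivatives that introduce the factors $\sqrt{y}-\sqrt{px}$ and $\sqrt{x}-\sqrt{py}$; note that in the bulk range $py<x<p^{-1}y$ both are strictly nonzero, so the first-order linearization is legitimate (they vanish exactly at the soft edge $x=p^{-1}y$). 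Matching the two scaling forms — Johansson's $\sigma(\gamma)$ reparametrized into $(x,y)$ coordinates, divided by the inversion slope built from $\partial_x\Psi,\partial_y\Psi$ — then forces the normalizing constant to collapse to $q\,(xy)^{1/6}p^{-1/6}$ divided by $(\sqrt{x}-\sqrt{py})^{2/3}(\sqrt{y}-\sqrt{px})^{2/3}$, which is the stated prefactor.

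\textbf{Main obstacle.} The hard part is the bookkeeping of constants through this change of variables, together with the tightness needed to upgrade the resulting one-point estimate to genuine convergence in distribution. Composing Johansson's coefficients with the derivatives of $\Psi$ to recover exactly $q(xy)^{1/6}/p^{1/6}$ and the $2/3$-powers is delicate, since a spurious factor of $p/q$ or $\sqrt{p}$ is easily introduced or lost. One must also absorb the integer-part discrepancies in $\fl{nx},\fl{ny}$ and in the level $\ell$, and exploit the monotonicity of $\ell\mapsto\bP(L\leq\ell)$ together with the uniform fluctuation bounds available for the geometric model to confirm that convergence of the single-threshold distribution function propagates to the full weak limit.
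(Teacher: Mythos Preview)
The paper does not prove this theorem at all: it is quoted from \cite{Sch} purely as background, introduced by the phrase ``in \cite{Sch} they show, through an asymptotic analysis, the following,'' and is used only to motivate why the soft edge $x=p^{-1}y$ requires separate treatment. So there is no ``paper's own proof'' to compare against.

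That said, your proposed route is a genuine and workable proof, and it is different from what \cite{Sch} actually do. Priezzhev--Sch\"utz obtain the result by a direct asymptotic (saddle-point) analysis of an exact determinantal/Bethe-ansatz formula for the distribution of $L$. Your approach instead recycles the machinery this paper builds: from \eqref{defB} and \eqref{Timo} one has, for $m=\fl{nx}$, $n'=\fl{ny}$ and $\ell=m-j$,
\[
\bP\{L(m,n')\le \ell\}=\bP\{G(n'-\ell,\,m-\ell)\le n'+m-\ell-1\},
\]
so with $\ell=n\Psi(x,y)+sn^{1/3}$ the $G$-event lives at the macroscopic point $\big(y-\Psi(x,y),\,x-\Psi(x,y)\big)=\big(q^{-1}(\sqrt{y}-\sqrt{px})^2,\,q^{-1}(\sqrt{x}-\sqrt{py})^2\big)$, which is in the bulk of the geometric model precisely when $py<x<p^{-1}y$. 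Johansson's one-point Tracy--Widom theorem for geometric weights then applies, and the Taylor expansion you describe (with the derivatives $\partial_x\Psi,\partial_y\Psi$ you computed) converts the $G$-fluctuation coefficient into the stated prefactor. The monotonicity of $\ell\mapsto\bP\{L\le\ell\}$ handles the integer-part corrections. What your route buys is that the hard analysis is outsourced entirely to Johansson; what \cite{Sch} buys is a self-contained derivation that does not rely on an external fluctuation theorem. Your honest identification of the constant-matching as the delicate step is accurate; the algebra does close, and the identity $\Phi\big(y-\Psi,x-\Psi\big)=x+y-\Psi$ (which you implicitly need at zeroth order) is a pleasant check.
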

Notice that in our case $x=py$; the denominator is $0$ and so we need to treat the edge differently.

One can guess the correct scaling for the edge result by Taylor expanding
 the second branch of $\eqref{Timoiscool}$ 
with $x=p^{-1}n-xn^{a-1}$ and $y=1$. 
This also hints at the cut-off at $a=1/2$.
 Theorems \ref{basic} and \ref{thm23} demonstrate this,
together with the rather surprising vanishing of the error
for $a\leq 1/2$. 

Theorem \ref{basic} can be proved using the same idea as for the proof of Theorem \ref{thm23} but we present a more elementary proof.   

\section{Preliminaries}
\label{Preliminaries}
We begin by describing the DTASEP model. After that we define a new particle process $R$ on the initial lattice configuration of the BLIP model, and show how the DTASEP naturally arises from the process $R$.

\subsection{Discrete time totally asymmetric fragmentation process and discrete TASEP with backward updating}

Consider a one dimensional lattice, where each lattice point is occupied by at most one particle. A string of $n$ consecutive particles is called a platoon (also called a cluster in \cite{Schutz2}) of size $n$, if it is bounded by `holes' (empty sites). The platoon to the left of hole $j$ is the $j-$th platoon and its size is denoted by $n_j$.

\medskip

At each time step, a piece of random size $0\leq M_j \leq n_j$ breaks off from platoon $j$ and moves to the left by one lattice point. 

Define
\begin {displaymath}
\bP(M_{j}= k) =\left\{
\begin{array}{ll}

\vspace{0.1 in}
pq^k, & \textrm {if } k < n_j \\

q^{n_j}, & \textrm {if } k=n_j.
\end {array}
\right.
\end{displaymath}

\medskip

The DTASEP with backward update models the motion viewing the particles as individuals rather than as  fragments of platoons. To be precise,  let $w_i(t)$ denote the position of particle $i$ at time $t$. Start with an initial configuration of particles $w_i = w_i(0)$, satisfying $w_{i-1} < w_i$. At each time step $t$, we update the process from left to right, in the sense that particle $i$ moves one unit to the left at time $t$ with probability $q$ if one of two things is true:

\medskip

(i)  $w_i(t-1) > w_{i-1}(t-1) +1$

\medskip
 
(ii) $w_i(t-1) = w_{i-1}(t-1) +1$ and $w_{i-1}(t) = w_{i-1}(t-1) -1 $

\medskip

In any other case, the jump is suppressed with probability 1. Notice that (ii) implies that if a particle made a jump at time $t$, then the particle immediately to its right has an opportunity to jump, even if the position became available exactly at time $t$.

\medskip

We would like to define DTASEP on the same lattice configuration on which we defined the BLIP model. We need to define a new particle process which will turn out to be important in the proofs that follow.

\medskip

\subsection{The process $R$} First, index each square of $\bN^2$ by its \textsl{lower-}right corner, and then mark each square with $\times$ if the \textsl{upper-}right corner is marked (i.e. the Bernoulli r.v. gets the value $1$ there, which happens with probability $p$). This shifts the lattice $\bN^2$ to the lattice $\bN\times\bZ_+$. Fix a point $(m',n')$ in the new lattice and let $L'(m',n')$ be the cardinality of BLIP if we start from $(1,0)$ and end at $(m',n')$. Then, since we mark each square by the upper-right corner, we have the obvious relation 
\be
L'(m',n') = L(m',n'+1).
\ee
We define the $R$ - process on the two dimensional space $\bN \times\bZ_+$, with time increasing in the vertical direction.

Let $r_k(t)$ be the position of the $k-$th particle of the $R$-process at time $t$. Start with initial particle configuration  \be r_k(0) = k, \quad k\geq 1 .\ee Embed the particles in the lattice $\bZ_+^2$ by defining
\be R_k(t) = (r_k(t), t), \quad k \geq 1, \quad t \in \bZ_+.\ee

\textbf{Evolution of the $R$ process.}

\medskip

There exists (w.p.\ $1$) an particle $k^*$ that  lies on a marked space-time square, such that no particle to its left lies on a marked square ($k^* =5$ in Figure \ref{fig1}). 

At time $t=1$, particle $k^*$ moves $1$ unit to the right, pushing all other particles to its right with it. In the 2-dimensional picture, the particle moves by the vector $(1,1)$. Also notice that platoons start to form. 

In general, for $t>0$, at time $t-1$ we label the platoons from left to right. Each platoon behaves independently. Let $k^*_m$ be the leftmost particle that lies on a marked space-time square in the $m-th$ platoon (i.e. the space-time square $R_{k^*_m}(t-1) = (r_{k^*_m}(t-1),t )$ is marked). Then, at time $t$, particle $k^*_m$ jumps by the vector $(1,1)$ and moves all the particles to its right also, as long as they are in the same platoon with it. The remaining particles of the platoon to the left of $k^*_m$ move by the vector (0,1). It is possible that no particle of a platoon lies on a marked space-time square. If this happens then that whole platoon moves by the vector $(0,1)$.(See Figure \ref{fig1}.) 

To summarize: $r_k(t+1)=r_k(t)+1$ if there exists $k^*\leq k$ such that $r_{k^*}(t)=r_k(t)- (k-k^*)$ (particles $k,k^*$ are in the same platoon) and $R_{k^*}(t)$ is a marked space-time square. In any other case, $r_k(t+1)=r_k(t)$.  

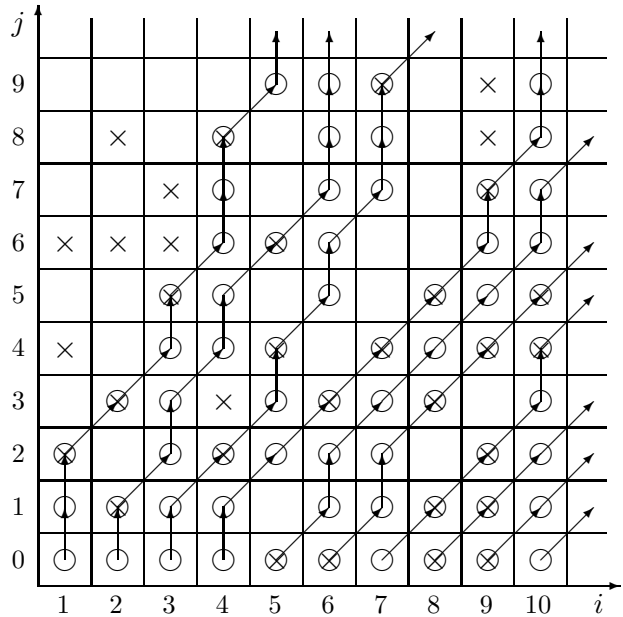
\begin{figure}[ht]
\begin{center}
\begin{picture}(250,250)(-20,-20) 
\put(0,0){\vector(1,0){220}} 
\put(0,0){\vector(0,1){220}} 

\put(7,-10){\small 1} \put(27,-10){\small 2}
\put(47,-10){\small 3} \put(67,-10){\small 4}
\put(87,-10){\small 5} \put(107,-10){\small 6}
\put(127,-10){\small 7}\put(147,-10){\small 8}
\put(167,-10){\small 9}\put(184,-10){\small 10}

\put(-10,7){\small 0} \put(-10,27){\small 1}
\put(-10,47){\small 2} \put(-10,67){\small 3} 
\put(-10,87){\small 4} \put(-10,107){\small 5} 
\put(-10,127){\small 6} \put(-10,147){\small 7} 
\put(-10,167){\small 8} \put(-10,187){\small 9}

\put(210,-10){$i$} \put(-10,210){$j$} 

\multiput(0,20)(0,20){10}{\line(1,0){215}}
\multiput(20,0)(20,0){10}{\line(0,1){215}}

\multiput(10,10)(20,0){4}{\vector(0,1){20}} 
\multiput(90,10)(20,0){6}{\vector(1,1){20}}
\multiput(10,30)(20,0){1}{\vector(0,1){20}}
\multiput(30,30)(20,0){3}{\vector(1,1){20}} 
\multiput(110,30)(20,0){2}{\vector(0,1){20}}
\multiput(150,30)(20,0){3}{\vector(1,1){20}}
\multiput(10,50)(20,0){1}{\vector(1,1){20}}
\multiput(50,50)(20,0){1}{\vector(0,1){20}}
\multiput(70,50)(20,0){4}{\vector(1,1){20}}
\multiput(170,50)(20,0){2}{\vector(1,1){20}}
\multiput(30,70)(20,0){2}{\vector(1,1){20}}
\multiput(90,70)(20,0){1}{\vector(0,1){20}}
\multiput(110,70)(20,0){3}{\vector(1,1){20}}
\multiput(190,70)(20,0){1}{\vector(0,1){20}}
\multiput(50,90)(20,0){2}{\vector(0,1){20}}
\multiput(90,90)(20,0){1}{\vector(1,1){20}}
\multiput(130,90)(20,0){4}{\vector(1,1){20}}
\multiput(50,110)(20,0){2}{\vector(1,1){20}}
\multiput(110,110)(20,0){1}{\vector(0,1){20}}
\multiput(150,110)(20,0){3}{\vector(1,1){20}}
\multiput(70,130)(20,0){1}{\vector(0,1){20}}
\multiput(90,130)(20,0){2}{\vector(1,1){20}}
\multiput(170,130)(20,0){2}{\vector(0,1){20}}
\multiput(70,150)(20,0){1}{\vector(0,1){20}}
\multiput(110,150)(20,0){2}{\vector(0,1){20}}
\multiput(170,150)(20,0){2}{\vector(1,1){20}}
\multiput(70,170)(20,0){1}{\vector(1,1){20}}
\multiput(110,170)(20,0){2}{\vector(0,1){20}}
\multiput(190,170)(20,0){1}{\vector(0,1){20}}
\multiput(90,190)(20,0){2}{\vector(0,1){20}}
\multiput(130,190)(20,0){1}{\vector(1,1){20}}
\multiput(190,190)(20,0){1}{\vector(0,1){20}}


\multiput(10,10)(20,0){10}{\circle{7.5}}
\multiput(85.5,6.5)(20,0){2}{\large$\times$} 
\multiput(145.5,6.5)(20,0){2}{\large$\times$}

\multiput(10,30)(20,0){4}{\circle{7.5}}
\multiput(110,30)(20,0){5}{\circle{7.5}}
\multiput(25.5,26.5)(20,0){1}{\large$\times$} 
\multiput(145.5,26.5)(20,0){2}{\large$\times$}

\multiput(10,50)(20,0){1}{\circle{7.5}}
\multiput(50,50)(20,0){5}{\circle{7.5}}
\multiput(170,50)(20,0){2}{\circle{7.5}}
\multiput(5.5,46.5)(20,0){1}{\large$\times$} 
\multiput(65.5,46.5)(20,0){1}{\large$\times$}
\multiput(165.5,46.5)(20,0){1}{\large$\times$}

\multiput(30,70)(20,0){2}{\circle{7.5}}
\multiput(90,70)(20,0){4}{\circle{7.5}}
\multiput(190,70)(20,0){1}{\circle{7.5}}
\multiput(25.5,66.5)(20,0){1}{\large$\times$} 
\multiput(65.5,66.5)(20,0){1}{\large$\times$}
\multiput(105.5,66.5)(20,0){1}{\large$\times$}
\multiput(145.5,66.5)(20,0){1}{\large$\times$}

\multiput(50,90)(20,0){3}{\circle{7.5}}
\multiput(130,90)(20,0){4}{\circle{7.5}}
\multiput(5.5,86.5)(20,0){1}{\large$\times$} 
\multiput(85.5,86.5)(20,0){1}{\large$\times$}
\multiput(125.5,86.5)(20,0){1}{\large$\times$}
\multiput(165.5,86.5)(20,0){2}{\large$\times$}

\multiput(50,110)(20,0){2}{\circle{7.5}}
\multiput(110,110)(20,0){1}{\circle{7.5}}
\multiput(150,110)(20,0){3}{\circle{7.5}}
\multiput(45.5,106.5)(20,0){1}{\large$\times$} 
\multiput(145.5,106.5)(20,0){1}{\large$\times$}
\multiput(185.5,106.5)(20,0){1}{\large$\times$}

\multiput(70,130)(20,0){3}{\circle{7.5}}
\multiput(170,130)(20,0){2}{\circle{7.5}}
\multiput(5.5,126.5)(20,0){3}{\large$\times$} 
\multiput(85.5,126.5)(20,0){1}{\large$\times$}

\multiput(70,150)(20,0){1}{\circle{7.5}}
\multiput(110,150)(20,0){2}{\circle{7.5}}
\multiput(170,150)(20,0){2}{\circle{7.5}}
\multiput(45.5,146.5)(20,0){1}{\large$\times$} 
\multiput(165.5,146.5)(20,0){1}{\large$\times$}

\multiput(70,170)(20,0){1}{\circle{7.5}}
\multiput(110,170)(20,0){2}{\circle{7.5}}
\multiput(190,170)(20,0){1}{\circle{7.5}}
\multiput(25.5,166.5)(20,0){1}{\large$\times$} 
\multiput(65.5,166.5)(20,0){1}{\large$\times$}
\multiput(165.5,166.5)(20,0){1}{\large$\times$}

\multiput(90,190)(20,0){3}{\circle{7.5}}
\multiput(190,190)(20,0){1}{\circle{7.5}}
\multiput(125.5,186.5)(20,0){1}{\large$\times$} 
\multiput(165.5,186.5)(20,0){1}{\large$\times$}
\end{picture}
\end{center}
\caption{Evolution of the process $R$, in a 10 by 10 rectangle. Circles on any horizontal level $j$ denote particle locations $r_{k}(j)$ at time $j$. The Bernoulli($p$) marks are denoted by $\times$. }\label{fig1}
\end{figure}%

To convert the process $R(t)$ into DTASEP apply the lattice transformation $(i,j)\mapsto(i-j,j)$. Consider the effect on jumps:

\textbf{Case 1:} If particle $k$ in the $R$ process moved by vector $(0,1)$, then in the transformed picture it moves by vector $(-1,1)$. So it takes a jump to the left with probability $q$, independently, as long as there is room to move at that time step (i.e. particle $k-1$ also jumps at the same time or is at a distance greater than 1).

\textbf{Case 2:} If particle $k$ in the $R$ process moved by vector $(1,1)$ with probability $p$, in the transformed lattice it moves by $(0,1)$. All particles in its platoon that it was pushing in the earlier picture are now blocked and they too must move by $(0,1)$ in the transformed picture. 

Let $\tilde{r}_k(t)$ be the position of particle $k$ at time $t$ in DTASEP. By the description above we get \be \tilde{r}_k(t) = r_k(t)-t, \quad t\in \bZ_+.\ee Define \begin{align}\tau(i,k) &= \inf\{t \geq 0: \text{particle $k$ jumped $i$ times in the DTASEP process} \}\notag \\
                              &= \inf\{t \geq 0: \tilde{r}_k(t) = k-i \}\notag \\
                              &= \inf\{t \geq 0: r_k(t) = k-i+t \}.\notag    
\end{align} with boundary conditions $\tau(0,k)= \tau(k,0)=0$ for all $k$. Note that the times $\tau(i,k)$ are strictly increasing in $i$ and non-decreasing in $k$.                       

Observe that \hfill\penalty -10000 
\hbox{\kern 5pt\vbox{
\hbox to 40em{\hfill\hbox to 23em{\textsl{particle $k$ jumped $t-i$ times by time $t$ in DTASEP}}\hfill}
\hbox to 40em{\hfill\hbox to 23em{\textsl{if and only if $k$ jumped $i$ times to the right in $R$.}}\hfill}
}}
\vbox{\kern -50pt\begin{equation}
\label{dtasep-R}
\end{equation}
\kern 50pt
}
For a fixed space-time square $(s,t) \in \bN\times\bZ_+$,
the authors in \cite{Sch} derive the following relation. 

\begin{proposition} Let $(s,t) \in \bN\times \bZ_+$ . Then 
 $$ L'(s,t) = L(s,t+1)= s -  \max\{k: s \geq k\geq (s-t-1)\vee 1,  \tau( t + 1 - s + k , k) \leq t+1\} $$

\noindent with $L'(s,t) = s$ if the above set is empty.
\label{relation}
\end{proposition}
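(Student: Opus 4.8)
The plan is to first rewrite the right-hand side in a geometrically transparent form, and then prove the resulting identity by induction. For the first step, start from the formula $\tau(i,k)=\inf\{t\ge 0:\ r_k(t)=k-i+t\}$ recorded above: the quantity $g_k(t):=k+t-r_k(t)$ counts the steps among the first $t$ in which particle $k$ did \emph{not} move right in the $R$-process, so it is nondecreasing in $t$ and increases by at most one per step, whence $\tau(i,k)\le T\iff g_k(T)\ge i\iff r_k(T)\le k+T-i$ (for $i\ge 0$). With $T=t+1$ and $i=t+1-s+k$ this reads $\tau(t+1-s+k,k)\le t+1\iff r_k(t+1)\le s$. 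Since $k\mapsto r_k(t+1)$ is strictly increasing (platoons occupy consecutive sites and particles never change order), $\{k:\ r_k(t+1)\le s\}$ is an initial segment $\{1,\dots,M\}$ of $\bN$ (with $M=0$ iff $r_1(t+1)>s$); and because $k\le r_k(t+1)\le k+t+1$ one checks that the side conditions $(s-t-1)\vee 1\le k\le s$ do not change the resulting maximum (in the case $M=0$ both the stated formula and $s-M$ equal $s$). Hence the right-hand side of the proposition equals $s-M$, i.e.\ the number of sites of $\{1,\dots,s\}$ that are \emph{empty} at time $t+1$ in the process $R$. Writing $e(s,u)$ for this empty-site count, it therefore suffices to prove
\be
L'(s,t)=L(s,t+1)=e(s,t+1),\qquad s\ge 1,\ t\ge 0 .
\label{plan-star}
\ee

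I would prove \eqref{plan-star} by induction on $s+t$, with the conventions $L'(0,t)=L'(s,-1)=0$ and correspondingly $e(0,u)=e(s,0)=0$, so that $s=0$ and $t=-1$ are trivial base cases. The left side satisfies the last-passage recursion for strictly increasing paths,
\be
L'(s,t)=\max\bigl\{\,L'(s-1,t),\ L'(s,t-1),\ b_{s,t}\,\bigl(L'(s-1,t-1)+1\bigr)\,\bigr\},
\label{plan-Lrec}
\ee
where $b_{s,t}=1$ if the square $(s,t)$ of the shifted lattice $\bN\times\bZ_+$ carries a mark and $b_{s,t}=0$ otherwise; \eqref{plan-Lrec} follows by splitting an optimal path according to whether it meets column $s$, noting that such a meeting must occur at the path's last vertex because the path is strictly increasing in the first coordinate. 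By the induction hypothesis applied to $(s-1,t)$, $(s,t-1)$ and $(s-1,t-1)$, the terms $L'(s-1,t)$, $L'(s,t-1)$ and $L'(s-1,t-1)$ equal $e(s-1,t+1)$, $e(s,t)$ and $e(s-1,t)$ respectively, so the induction closes provided the empty-site count obeys the same recursion,
\be
e(s,t+1)=\max\bigl\{\,e(s-1,t+1),\ e(s,t),\ b_{s,t}\,\bigl(e(s-1,t)+1\bigr)\,\bigr\}.
\label{plan-erec}
\ee

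Establishing \eqref{plan-erec} is the heart of the argument and the step I expect to be the main obstacle. Two elementary facts reduce it to a short case analysis. First, in the $R$-process a particle never decreases its position and at most one particle sits at site $s$, so no particle enters the window $\{1,\dots,s\}$ from the right, and over one step the window's occupancy changes only when the particle at site $s$ (if present) advances to $s+1$; hence $e(s,u+1)=e(s,u)+\ind\{\text{site }s\text{ occupied at time }u\text{ and that particle moves right at step }u{+}1\}$, so $e(s,\cdot)$ is nondecreasing and $e(s,t+1)-e(s,t)\in\{0,1\}$. Second, $e(s,u)=e(s-1,u)+\ind\{s\text{ empty at time }u\}$, which reduces to $e(s,u)=e(s-1,u)$ when site $s$ is occupied at time $u$. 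With these one runs through the cases according to whether site $s$ is occupied at time $t$ and whether the particle there moves right. The genuinely delicate point is that the $R$-dynamics are \emph{nonlocal}: the leftmost marked particle $k^{*}_m$ of a platoon drags the whole sub-platoon to its right, so whether the particle at site $s$ advances may depend on marks far to the left in its platoon rather than only on the square $(s,t)$. This is handled by the structural fact that the set of particles dragged in one step is an \emph{interval} beginning at $k^{*}_m$: if the particle at site $s$ is dragged but $(s,t)$ is not marked, then that particle lies strictly to the right of $k^{*}_m$, hence so does the particle at site $s-1$, which is therefore also dragged, and that event is exactly what the term $e(s-1,t+1)=e(s-1,t)+1$ in \eqref{plan-erec} records; when $(s,t)$ is marked and site $s$ is occupied the particle there is necessarily dragged and the term $b_{s,t}(e(s-1,t)+1)$ takes over, while the remaining subcases (site $s$ empty at time $t$) are immediate from the two facts above. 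Carrying all of this out — together with the boundary identity $e(s,1)=\ind\{k^{*}_1\le s\}=L'(s,0)$, and the bookkeeping of the one-unit shifts among $\bN^2$, the shifted lattice $\bN\times\bZ_+$ on which $R$ lives, and the DTASEP coordinates $\tilde r_k=r_k-t$ — is where essentially all the work lies. Once \eqref{plan-erec} is proved, \eqref{plan-star} follows by induction, and the reduction carried out in the first step yields the proposition.
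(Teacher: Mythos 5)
Your proposal is correct, and it reaches the identity by a genuinely different route from the paper. The paper's proof goes through Lemma \ref{insteadofterraces} ($L'(s,t)\geq y$ iff particle $s-y+1$ jumps right at least $y$ times in the first $t+1$ steps), which it establishes by two separate inductions: one direction builds a path of weight $n$ terminating at the pre-jump position $P_{n,k}$ of particle $k$, the other tracks, mark by mark along a given path, which particle must have accumulated how many jumps; the Proposition then follows by translating jump counts into the $\tau$ variables. You instead unwind $\tau(t+1-s+k,k)\leq t+1$ into $r_k(t+1)\leq s$, observe that the right-hand side of the Proposition is exactly the number of holes $e(s,t+1)$ in the window $\{1,\dots,s\}$, and close a single induction on $s+t$ by checking that $e$ obeys the same dynamic-programming recursion as $L'$. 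Your reduction is sound (the side constraints are indeed inactive because $k\leq r_k(t+1)\leq k+t+1$, and the empty-set convention matches $M=0$), and your case analysis for \eqref{plan-erec} correctly isolates the one delicate point — a particle at site $s$ advancing because it is dragged from the left rather than marked — and resolves it with the right structural fact, that the dragged set is a suffix interval of the platoon, so the particle at $s-1$ advances too and $e(s-1,t+1)=e(s-1,t)+1$ supplies the needed term. (The remaining subcase, site $s$ occupied and its particle not moving, also uses the platoon structure to see that the particle at $s-1$ cannot move either, but this is immediate.) What your approach buys is a more conceptual identity — longest strictly increasing path equals hole count, the discrete analogue of the patience-sorting/hydrodynamic picture — proved by one uniform induction on the standard last-passage recursion; what the paper's approach buys is the finer particle-by-particle statement of Lemma \ref{insteadofterraces} and the avoidance of any case analysis on the nonlocal platoon dynamics, at the cost of two separate inductive constructions.
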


\section{Proof of Proposition \ref{relation}}  

We start with the proof of a preliminary lemma connecting the $R$-process with $L'$, directly followed by the proof that it is in fact equivalent to Proposition \ref{relation}. 

\begin{lemma}
For a space-time square $(s,t) \in \bN\times \bZ_+,$ and $s \geq y\geq 1$, we have that $L'(s,t)\geq y$ if and only if particle $s-y+1$ jumps to the right (in the $R$-process) at least $y$ times during the first $t+1$ time-steps.
\label{insteadofterraces}
\end{lemma}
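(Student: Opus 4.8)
The plan is to prove the asserted equivalence by induction on $t$, after recording the combinatorial skeleton of the $R$-process. Write $b_k(t)$ for the number of right-jumps particle $k$ has made in the first $t$ steps, so that $r_k(t)=k+b_k(t)$; the claim to be proved is that $L'(s,t)\ge y$ iff $b_{s-y+1}(t+1)\ge y$ for all $s\ge y\ge 1$. Three elementary properties of the $R$-process will be used throughout: (i) $b_k(t)$ is nondecreasing in $k$, because particles keep their order, $r_k(t)<r_{k+1}(t)$; (ii) consequently the platoons at time $t$ are exactly the maximal index-intervals on which $b_\cdot(t)$ is constant, and particles in a common platoon share the same value of $b_\cdot(t)$; and (iii) by the one-line update rule recalled just before Figure~\ref{fig1}, particle $k$ jumps right at step $t+1$ exactly when its platoon at time $t$ contains some $k^*\le k$ whose position $r_{k^*}(t)$ carries a mark at that level.

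For the base case $t=0$ all particles sit at $1,2,3,\dots$, there is a single platoon, and particle $s$ jumps at the first step iff some position $\le s$ is marked at that level; since a strictly increasing chain of marked squares confined to a single level has length at most one, this is exactly $L'(s,0)=L(s,1)\ge 1$, and for $y\ge 2$ both sides are vacuously false. For the inductive step I would play off two recursions against each other. Classifying a longest chain of marked squares into $(s,t+1)$ according to whether it uses the top level gives
\[
L'(s,t+1)\ge y\iff L'(s,t)\ge y\ \text{ or }\ \exists\,j\le s:\ (j,t+1)\text{ is marked and }L'(j-1,t)\ge y-1,
\]
while on the particle side $b_{s-y+1}(t+2)\ge y$ holds iff $b_{s-y+1}(t+1)\ge y$, or else $b_{s-y+1}(t+1)=y-1$ and particle $s-y+1$ jumps at step $t+2$. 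The first alternatives match by the inductive hypothesis, and one checks, again using (i) and the hypothesis, that the two ``second alternatives'' can only occur when $b_{s-y+1}(t+1)=y-1$. It then remains to show, assuming $b_{s-y+1}(t+1)=y-1$, that particle $s-y+1$ jumps at step $t+2$ iff there is a marked $j\le s$ with $L'(j-1,t)\ge y-1$. The inductive hypothesis turns $L'(j-1,t)\ge y-1$ into $b_{j-y+1}(t+1)\ge y-1$, which by (i) and $b_{s-y+1}(t+1)=y-1$ forces $b_{j-y+1}(t+1)=\dots=b_{s-y+1}(t+1)=y-1$; this says precisely that particles $j-y+1,\dots,s-y+1$ form one consecutive block, occupying positions $j,\dots,s$ at time $t+1$, so that a mark at position $j$ is exactly a marked particle $\le s-y+1$ in this block's platoon, i.e. the condition in (iii). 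Reading the equivalences in the other direction gives the converse.

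I expect the main obstacle to be essentially bookkeeping: keeping straight the square-lattice shift of Section~\ref{Preliminaries} (squares indexed by lower-right corners, with $L'(m',n')=L(m',n'+1)$) together with the one-step delay between ``the square seen at $R$-step $\ell$'' and ``chain-level $\ell-1$'', which is what produces the $t+1$ on the particle side against the $t$ on the passage side; the substantive point, where (i)--(ii) really do the work, is the identification in the inductive step of ``a marked particle somewhere in the platoon of $s-y+1$'' with ``a single marked square at the right endpoint $j$''.

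Finally I would close by deriving Proposition~\ref{relation} from the Lemma: with $k=s-y+1$, the identities $\tilde r_k(t)=r_k(t)-t=k+b_k(t)-t$ and $\tau(i,k)=\inf\{t:\tilde r_k(t)=k-i\}$ recorded before the Proposition turn ``$b_k(t+1)\ge y$'' into ``$\tau(t+1-s+k,k)>t+1$''; since $\tau(t+1-s+k,k)$ is increasing in $k$, the set $\{k:\tau(t+1-s+k,k)\le t+1\}$ is a down-interval in the admissible range, and $L'(s,t)=s-\max\{k:\tau(t+1-s+k,k)\le t+1\}$ drops out, with the empty-set and boundary conventions matching those in the statement.
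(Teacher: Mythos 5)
Your proof is correct, but it is organized quite differently from the paper's. The paper proves the two implications separately, each by an induction \emph{along the path/jump count}: for the ``if'' direction it tracks the positions $P_{n,k}$ at which particle $k$ makes its $n$th jump and, by following the chain of pushes back to the triggering mark, builds a weight-$n$ path to $P_{n,k}$ from a weight-$(n-1)$ path; for the ``only if'' direction it takes the marks $(i_r,j_r)$ of a given chain and shows by induction on $r$ that particle $i_r-r$ has jumped at least $r+1$ times by time $j_r+1$. You instead run a single induction \emph{on the time parameter} $t$, matching the one-step recursion for $L'(\cdot,t+1)$ (peel off the top level, conditioning on whether the chain uses a mark $(j,t+1)$) against the one-step recursion for the jump counts $b_k(t+2)$, and using order preservation and the ``constant on platoons'' property of $b_\cdot(t)$ to identify the two increment events: the reduction to the case $b_{s-y+1}(t+1)=y-1$ and the identification of the block of particles sitting at positions $j,\dots,s$ are exactly the points that need care, and you handle them correctly (including the $y=1$ boundary, where the induction hypothesis is vacuous). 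The trade-off: the paper's argument is more constructive — it literally exhibits the path, respectively the jump schedule, witnessing each inequality, and needs almost nothing about the global platoon structure — whereas your recursion-matching argument proves both directions simultaneously and makes transparent \emph{why} the top-level marks seen by the chain are the same marks that trigger the step-$(t+2)$ jumps, at the price of invoking the structural facts (i)--(ii) explicitly and of somewhat heavier bookkeeping in the inductive step. Your closing derivation of Proposition~\ref{relation} from the lemma also matches the paper's, via the translation $b_k(t+1)\ge y \iff \tau(t+1-s+k,k)>t+1$ and the monotonicity of $\tau(t+1-s+\cdot,\cdot)$.
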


\begin{proof}
We will proceed by way of induction. 

$(\Longleftarrow)$ Define $P_{n,k} = R(t_{n,k}) = (r(t_{n,k}),t_{n,k})$ to be the position of particle $k$ (in the coordinates of Figure \ref{fig1}) just before it makes its $n$th jump (e.g. in Figure \ref{fig1}, $P_{2,3}=(4,2)$). Our aim is to prove that \be \text{for all $n\geq0 ,k\geq1$ there is a path of weight $n$ to the point $P_{n,k}$}.\label{induction1}\ee 
Suppose that \eqref{induction1} holds; then if particle $s-y+1$ makes $y$ jumps to the right in the first $t+1$ time-steps, we have $P_{y,s-y+1}\leq (s,t)$ and so there is in fact a path of weight at least $y$ to $(s,t)$ as desired for the lemma.

\textsl{Base Case:} For $k=1$ its easy to verify \eqref{induction1} for all $n$, since the first particle can only jump when it lands on a marked site. For the $n = 0$ case and arbitrary $k$, first assume $t_{1,k}\geq1$ and then observe that if particle $k$ did not jump in the first $t_{1,k}$ time steps, then the rectangle $[1,k]\times[0,t_{1,k}-1]$ has no marked sites and all paths in it have weight $0$. If $t_{1,k}=0$ then it does not make sense to consider $n=0$ since there exists a mark at $(k',0)\leq (k,0)$ for some minimal $k'$ and $P_{1,k}=1$ for $k \geq k'.$ In either case though, the induction base case is true.

\textsl{Induction step:} As explained in Section \ref{Preliminaries}, the $n$th jump of particle $k$ happens either because $P_{n,k}$ is marked (so the particle `decides' to jump itself) or because it is pushed to the right by some particle $k'< k$ which is itself performing its $n$th jump, in which case there is a mark at site $P_{n,k'}.$ In either case there is a mark at site $P_{n,k'}$ for some $k'\leq k.$ Now consider the point $P_{n-1,k'}$. This point is strictly south-west of $P_{n,k'}.$ By the induction hypothesis there is a path of weight $n-1$ to this point. Adding the mark at point $P_{n,k'}$ gives a path of weight $n$ to the point $P_{n,k'}$ and since $P_{n,k'}\leq P_{n,k}$, this is also a path of weight $n$ to the point $P_{n,k}.$   

$(\Longrightarrow)$ Suppose there is a path of weight $y$ to the point $(s,t).$ Let the marks on this be $(i_0,j_0), (i_1,j_1),...,(i_{y-1},j_{y-1}).$ By definition, $1\leq i_0 < i_1 <...<i_{y-1}\leq s$ and $0\leq j_0<j_1<...<j_{y-1}\leq t.$ 

We aim to prove that 
particle $i_r-r$ has jumped at least $r+1$ times during the first $j_r+1$ steps of the $R$-process (hence also all particles to the right of $i_r-r$ have jumped at least $r+1$ times during the first $j_r+1$ steps). Note that at time $n$, the $n$th time step is completed.

\textsl{Base Case:} For $r=0$ we want to show that particle $i_0$ jumped at least once in the first $j_0+1$ time-steps. We know that there is a mark at site $(i_0, j_0)$ so, either particle $i_0$ touches that site because it did not jump earlier, or it has already jumped at an earlier time. In any case, the statement is true.

\textsl{Induction step:} Suppose it is true for $r=r_0-1$ and we wish to show it for $r=r_0.$ Note that $i_{r_0}-r_0\geq i_{r_0-1}-(r_0-1)$. By the induction hypothesis, particle $i_{r_0}-r_0$ has jumped at least $r_0$ times \textsl{before} time $j_{r_0}$ (because particle $i_{r_0}-r_0$ is the same particle as, or to the right of, particle $i_{r_0-1}-(r_0-1)$).

If in fact particle $i_{r_0}-r_0$ has already jumped $r_0+1$ times before time $j_{r_0}$, then we are done. Otherwise, it has jumped precisely $r_0$ times before time $j_{r_0}$. In this case, the particle is at position $i_{r_0}$ at time $j_{r_0}-1$ and it will jump at time $j_{r_0}$ due to the mark at point $(i_{r_0},j_{r_0})$. So indeed the particle has jumped at least $r_0+1$ when time-step $j_{r_0}+1$ is completed. 

In particular, putting $r=y-1$ shows that particle $i_{y-1}-y+1$ has jumped $y$ times during the first $j_{y-1}+1$ time-steps. Since $i_{y-1}\leq s$ and $j_{y-1}\leq t$, this implies that particle $s-y+1$ has jumped at least $y$ times during the first $t+1$ time-steps, as required by the lemma.
\end{proof}

\begin{proof}[Proof of Proposition \ref{relation}]
Let $L'(s,t) = n$ for some $n \in \bZ_+, n\leq \min\{s,t+1\}.$ Assume first that $\mathcal{C} =\{k: s \geq k\geq (s-t-1)\vee 1,  \tau( t + 1 - s + k , k) \leq t+1\}\neq \varnothing.$
For $(s,t)\in \bN\times\bZ_+$, define
\be k_{s,t}^* = \max\{k: s \geq k\geq (s-t-1)\vee 1,  \tau( t + 1 - s + k , k) \leq t+1\}.\ee 
We are going to show that 
\be
n=s-k_{s,t}^*.
\ee
By Lemma \ref{insteadofterraces} we know that particle $s-n+1$ jumped at least $n$ times by time $t+1$. By \eqref{dtasep-R} we have that it jumped at most $t-n+1$ times to the left in DTASEP, by time $t+1$. Hence, 
\be
\tau(t+1-s+(s-n+1),s-n+1) =\tau((t-n+1)+1,s-n+1)> t+1.  
\ee This implies that $k_{s,t}^* < s-n+1$; equivalently
\be
n\leq s -k_{s,t}^*. 
\ee

For the other inequality, observe that particle $s-n$ jumped to the right at most $n$ times in the $R$-process (in the opposite case Lemma \ref{insteadofterraces} implies that $L'(s,t)\geq n+1$ which contradicts our hypothesis). So particle $s-n$ jumped at least $t+1-n$ times in DTASEP by time $t+1$, therefore,
\be
\tau(t+1-s+(s-n),s-n) =\tau(t+1 -n,s-n) \leq t+1.  
\ee
This implies $k_{s,t}^* \geq s-n$; equivalently 
\be
s -k_{s,t}^*\leq n. 
\ee
To finish the proof, consider the case where $\mathcal{C} = \varnothing$. Let $k_0 = (s-t-1)\vee 1.$ Since $\tau(t+1-s+k,k)$ is non decreasing in $k$, $\mathcal{C} = \varnothing $ if and only if $s\leq t+1$ and $\tau(t+1-s+ k_0, k_0)\geq t+2$. This implies that $k_0=1$ and that particle $1$ jumped at least $s$ times to the right by time $t+1$, in the $R$ process (since $\tau(t-s+2,1)\geq t+2$ implies that by time $t+1$ the first particle jumped at most $t-s+1$ times in DTASEP). By Lemma \ref{insteadofterraces}, $L'(s,t)\geq s.$
\end{proof}

\section{Proof of results for the Longest Increasing Path model}

To make the notation slightly simpler, we can convert back to $L(m,n).$ Let $(m,n)\in \bN^2.$ Since $L(m,n)= L'(m,n-1)$, by Proposition \ref{relation} we get the equivalent form 
\be 
L(m,n) = m - \displaystyle\left(\max\{ k: (m-n)\vee 1\leq k\leq m, \tau( n- m + k  ,k) \leq n\}\vee 0 \right)
\label{proofformula}
\ee

Set $k^* = \max\{(m-n)\vee 1\leq k\leq m : \tau( n-m + k ,k) \leq n\}\vee 0 .$ For $(m-n)\vee 1\leq j\leq m$, we have the equality of events
\be
\mathcal{B}_{m,n,j}=\left\{L(m,n)\leq m-j\right\}=\left\{j\leq k^*\right\}= \left\{\tau( n-m + j ,j) \leq n\right\}
\label{defB}   
\ee
where the second equality comes from Proposition \ref{relation} and the last equality comes from 
the fact that $\tau(n-m+ \cdot, \cdot)$ is non-decreasing. It is going to be notationally convenient for the proofs that follow, to allow non-integer arguments in $\tau(n-m+ \cdot, \cdot).$ For $j \geq 1, j\notin \bN$, define
\be
\tau(n-m+ j, j) = \tau(n-m+ \fl{j}, \fl{j})
\ee and extend the definition of $\mathcal{B}_{m,n,j}$ in the obvious way.

A distributionally equivalent way of defining the process $\{\tau(i,j)\}_{i,j\geq 1}$ is by using the recursion 
\be 
\tau(i,j) = \left(\tau(i-1,j)+ 1\right)\vee\tau(i,j-1)+ \widetilde{Y}_{ij}\label{connection}
\ee 
where $\bP(\widetilde{Y}_{ij} = s) = qp^s$, $s\geq 0$ and $\{\tilde{Y}_{ij}\}$ are i.i.d.\  for $i, j\geq 1.$   

In words, the time that particle $j$ performs its $i$-th jump cannot happen before two events occur. First, particle $j$ itself needs to  jump $i-1$ times and is allowed to jump again starting from the next time step. Second, particle $j-1$ needs to jump $i$ times or else the exclusion rule forbids $j$ to jump so many times. The updating allows $j$ to jump its $i$th jump exactly at time $\left(\tau(i-1,j)+ 1\right)\vee\tau(i,j-1)$ with probability $q$. After these events occur, particle $j$ waits a geometrically distributed time for its next jump. 

We can connect equation \eqref{connection} with last passage times of the standard corner growth model with geometric weights.
\begin{lemma}Let $i\geq1$, $j\geq1$ and let $G(i,j)$ be defined by \eqref{basicref} with geometrically distributed random weights  $Y_v$, $\bP(Y_v = s) = qp^{s-1},$ for $s\in\bN$. Then,
\begin{equation}
\tau(i,j) =_{\mathcal{D}} G(i,j) - j + 1. 
\label{Timo}
\end{equation}
\end{lemma}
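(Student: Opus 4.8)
The plan is to recognize recursion \eqref{connection} as a disguised standard corner growth recursion after an affine change of variables, and then to reconcile the boundary data, which is the only place requiring care.

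\emph{Step 1: absorb the shift into the weights.} A random variable with law $\bP(\cdot = s) = qp^{s}$, $s\ge 0$, becomes one with law $\bP(\cdot = s) = qp^{s-1}$, $s\ge 1$, after adding $1$. Hence we may realize the i.i.d.\ family driving \eqref{connection} as $\widetilde Y_{ij} := Y_{ij}-1$, where $\{Y_{ij}\}$ are precisely the geometric weights defining $G$. This is a distributional (indeed a coupling) reduction and costs nothing.

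\emph{Step 2: match the interior recursion.} Set $H(i,j) := G(i,j) - j + 1$. Writing $G(i-1,j) = H(i-1,j) + j - 1$ and $G(i,j-1) = H(i,j-1) + j - 2$ and plugging into the standard recursion $G(i,j) = \max\bigl(G(i-1,j),G(i,j-1)\bigr) + Y_{ij}$ valid for $i,j\ge 2$, the constant $j-2$ factors out of the maximum and one is left with exactly
$$H(i,j) = \bigl((H(i-1,j)+1)\vee H(i,j-1)\bigr) + (Y_{ij}-1),$$
which is \eqref{connection} with $\widetilde Y_{ij} = Y_{ij}-1$. So $H$ and $\tau$ obey the same interior recursion with the same (coupled) noise.

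\emph{Step 3: the boundary bookkeeping, and conclusion.} The subtlety — and the reason a bare ``same recursion, done'' argument would be too quick — is that the boundary conventions do not transform into one another: $\tau$ is run off $\tau(i,0)=\tau(0,j)=0$, whereas $G$ is run off $G(i,0)=G(0,j)=0$, and the map $G\mapsto G-j+1$ sends the latter to $1$ and $1-j$. So I would verify the two base edges by hand. Solving \eqref{connection} along $j=1$ gives $\tau(i,1) = i + \sum_{k\le i}\widetilde Y_{k1}$, which under the coupling equals $\sum_{k\le i} Y_{k1} = G(i,1) = H(i,1)$; along $i=1$ one gets $\tau(1,j) = 1 + \sum_{k\le j}\widetilde Y_{1k}$, which equals $\sum_{k\le j}Y_{1k} - j + 1 = H(1,j)$. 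With $\widetilde Y_{ij} = Y_{ij}-1$ these are pathwise equalities on the two axes, and then induction on $i+j$ through the interior recursion of Step 2 (whose right-hand side references only $(i-1,j)$ and $(i,j-1)$, both still in the region $i,j\ge 1$) upgrades this to $\tau(i,j) = G(i,j) - j + 1$ pathwise, hence yields \eqref{Timo}. I expect the axis computations to be the only slightly delicate point; the interior identity in Step 2 is a one-line algebraic manipulation.
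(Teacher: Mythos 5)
Your proof is correct and takes essentially the same approach as the paper: both arguments couple $\widetilde Y_{ij}=Y_{ij}-1$ and induct on $i+j$ through the recursion \eqref{connection}. The only cosmetic difference is that the paper first establishes the variational identity $\tau(i,j)=i+\max_{\pi\in\Pi(i,j)}\sum_{v\in\pi}\widetilde Y_v$ and then converts via the fact that every up-right path has $i+j-1$ vertices, whereas you match the recursions for $H=G-j+1$ directly and verify the two boundary rows by hand; your explicit axis check is in fact slightly more careful about the boundary conventions than the paper's induction.
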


\begin{proof}
Recall that $Y_v =\widetilde{Y}_v +1 $. We begin by showing that
\be
\tau(i,j) = i + \max_{\pi \in \Pi(i,j)}\sum_{v\in \pi}\widetilde{Y}_v.
\label{lptau}
\ee
We induct on $n = i+j.$

\textsl{Base Case:} If $n=2$ then $i=j=1$ and a comparison between \eqref{connection} and \eqref{lptau} proves the base case (recall that $\tau(i,0) = \tau(0,j) = 0$).

\textsl{Induction Step:} Assume $n\geq 3$ and that \eqref{lptau} is true for all $i+j = n-1.$ We are going to show it for $i+j =n.$
\begin{align}
\tau(i,j) &= \left(\tau(i-1,j)+ 1\right)\vee\tau(i,j-1)+ \widetilde{Y}_{ij}\notag \\
          &=\left(i+\max_{\pi \in \Pi(i-1,j)}\sum_{v\in \pi}\widetilde{Y}_v \right)\vee\left(i+\max_{\pi \in \Pi(i,j-1)}\sum_{v\in \pi}\widetilde{Y}_v \right)+ \widetilde{Y}_{ij} \notag \\
          &= i+ \max_{\pi \in \Pi(i,j)}\sum_{v\in \pi}\widetilde{Y}_v. \notag
\end{align}
Now observe that on any up-right path we have exactly $i+j-1$ vertices. Then we can write \eqref{lptau} as 
\small{
\be
\tau(i,j)=  \max_{\pi \in \Pi(i,j)}\sum_{v\in \pi}\left\{\widetilde{Y}_v+1\right\} -j + 1=  \max_{\pi \in \Pi(i,j)}\sum_{v\in \pi}Y_v -j + 1 =  G(i,j) -j+1.\notag \qed \qedhere
\ee
}
\end{proof}

Now, to prove the main theorems.
\begin{proof}[Proof of Theorem \ref{basic}]
Proof of part (a). Let $\epsilon>0$ and let $d_n$ be a positive sequence such that $d_n\longrightarrow +\infty$, with $d_n = o(n).$ We want to show that for all $\epsilon >0$, 
\be
\lim_{n\rightarrow +\infty}\bP\left\{ \epsilon \leq \dfrac{n-L(\fl{p^{-1}n-xn^a},n)}{d_n} \right\}= 0.
\label{wish}
\ee 
Define $m= m(n) = \fl{p^{-1}n-xn^a}$ and $j= j(n) = m - n+\epsilon d_n.$ Notice that for $n$ large enough, $(m-n)\vee1\leq j \leq m.$ Therefore, we can rewrite equation \eqref{wish} using equation \eqref{defB}, and so it is equivalent to prove
\be
\lim_{n\rightarrow +\infty}\bP\left\{\mathcal{B}_{m,n,j} \right\}=\lim_{n\rightarrow +\infty}\bP\left\{ \tau( n-m + j ,j) \leq n\right\} = 0.
\label{wish2}
\ee 
From the definition of $j$ and equation \eqref{Timo}, we get 
\begin{align}
\bP\left\{ \tau( n-m + j ,j) \leq n\right\}&=\bP\left\{ G(\epsilon d_n, m-n+\epsilon d_n)\leq m+\epsilon d_n-1\right\}.
\end{align}
In order to prove \eqref{wish2}, we are going to show that 
\be
\lim_{n\rightarrow +\infty} \bP\left\{ G(\epsilon d_n, m-n+\epsilon d_n)\leq m+\epsilon d_n-1\right\}=0.
\label{wish3}
\ee 
Consider the rectangle $[\fl{\epsilon d_n}]\times[\fl{j}]$. Define $$\pi_i  = \{(1,1),(2,1),...,(i,1)\}\cup\{ (i,2),(i,3),.....,(i,\fl{j})\}\cup\{(i+1, \fl{j}),... (\fl{\epsilon d_n}, \fl{j})\}.$$ Also, set $$S_{\pi_i} =\sum_{v\in\pi_i}Y_v \quad\textrm{and} \quad  S_{(i,j)} = \sum_{k=1}^{\fl{j}}Y_{ik}.$$ For $c\in \bR$, we have the inclusion of events:

\be 
\{ G(\epsilon d_n, j) \leq c\}\subseteq \bigcap_{i \leq \epsilon d_n } \{S_{\pi_i} \leq c \}\subseteq \bigcap_{i \leq \epsilon d_n} \{S_{(i, j)}\leq c-\fl{\epsilon d_n}+1\}
\label{pathbound}
\ee where the last inclusion follows from the fact that the geometric weights $Y_{ik}$ start from $1$. Recall that $\bE Y_{ik} = q^{-1}.$ Note that $\bE S_{(i, j)} = p^{-1}n - xq^{-1}n^a + \epsilon q^{-1}d_n +Cq^{-1}$, where $C < 0$ is the error coming from the integer parts. Beeing a bit careful with the integer parts, we estimate
\begin{align} 
\bP\left\{\mathcal{B}_{m,n,j}\right\} &\leq \left(\bP\left\{S_{(i,j)} \leq \fl{p^{-1}n - xn^a}\right\}\right)^{\fl{\epsilon d_n}} = \left(\bP\left\{S_{(i,j)} \leq p^{-1}n - xn^a\right\}\right)^{\fl{\epsilon d_n}} \notag \\
                                      &= \left(\bP\left\{S_{(i,j)} - \bE S_{(i,j)}\leq p^{-1}n - xn^a- \bE S_{(i,j)}\right\}\right)^{\fl{\epsilon d_n}}.\label{Est}  
\end{align}
Since we are assuming that $ a \leq 1/2$ and $d_n > 0$, there exist $\delta > 0$ and $n_0 = n_0 (\delta) < +\infty$, such that for all $n > n_0$ we have
\be
\bP\left\{S_{(i,j)} - \bE S_{(i,j)}\leq  x\frac{p}{q}n^a -\frac{\epsilon}{q}d_n  \right\} <1 - \delta 
\ee 
by virtue of the CLT. Combining this with \eqref{Est}, we have proved equation \eqref{wish3} and thereby part (a) of Theorem \ref{basic}.

For part (b). Observe that in part (a) we actually proved that for $\epsilon >0$ and $n$ large enough, we have
\be
\bP\left\{ \epsilon \leq \frac{n- L(\fl{p^{-1}n-xn^a}, n)}{d_n}\right\} \leq \left( 1-\delta \right)^{{\epsilon d_n}}.
\ee
A Borel-Cantelli argument finishes the proof.
\end{proof}

Before proceeding to the proof for the non-trivial edge, we need some preliminary comments. We are going to use a modified version of Theorem \ref{GlynSepp} as shown in the next Lemma  (for which we omit the proof).
\begin{lemma}
Let $\mu$ be the expectation and $\sigma^2$ the variance of the weights $Y_{ik}$. Assume that $j/n \rightarrow c_1$, as $n \rightarrow \infty$,  $0<c_1<+\infty$, $0< y < +\infty$ are constants and $0<\beta< 1$. Then,  
\be
G(j, yn^{\beta})= \mu j + n^{\frac{1+\beta}{2}}(2\sigma \sqrt{c_1y} +o(1)) \quad \textrm{in probability,}
\label{Martin}
\ee
where $o(1)$ is a quantity that goes to $0$ in probability as $n$ gets large. 
\end{lemma}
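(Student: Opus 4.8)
The plan is to reduce \eqref{Martin} to Theorem~\ref{GlynSepp} by standardizing the weights and then sandwiching. Write $G_Y$ for the last-passage time with the geometric weights $Y_v$ and, for any i.i.d.\ family $\{V_v\}$, $G_V$ for the corresponding last-passage time. Since every weakly increasing up-right path from $(1,1)$ to $(m,n)$ has exactly $m+n-1$ vertices, $G_V$ transforms affinely under affine changes of the weights; so if $W_v := 1 + (Y_v-\mu)/\sigma$, then $\{W_v\}$ are i.i.d.\ with mean and variance $1$ and with exponential tails (inherited from the geometric $Y_v$), and
\be
G_Y(m,n) = \sigma\,G_W(m,n) - (\sigma-\mu)(m+n-1).
\ee
Thus it suffices to control $G_W(j,\fl{yn^\beta})$ (interpreting the second coordinate of $G(j,yn^\beta)$ as $\fl{yn^\beta}$; the rounding changes everything by $O(1)$).

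Next I would transfer Theorem~\ref{GlynSepp} — which has first coordinate exactly $n$ — to a first coordinate $j$ with $j/n\to c_1$, and simultaneously absorb the fact that $\fl{yn^\beta}$ is not literally of the form $\fl{x j^\beta}$. Since $j/n\to c_1\in(0,\infty)$ we have $\fl{yn^\beta} = y c_1^{-\beta} j^\beta(1+o(1))$, so for every $\eta>0$, eventually $\fl{(1-\eta)y c_1^{-\beta} j^\beta}\le \fl{yn^\beta}\le \fl{(1+\eta)y c_1^{-\beta} j^\beta}$, and monotonicity of $G_W$ in its second argument gives
\be
G_W\bigl(j,\fl{(1-\eta)y c_1^{-\beta} j^\beta}\bigr)\ \le\ G_W(j,\fl{yn^\beta})\ \le\ G_W\bigl(j,\fl{(1+\eta)y c_1^{-\beta} j^\beta}\bigr).
\ee
Applying Theorem~\ref{GlynSepp} along the sequence $j\to\infty$ with exponent $\beta$ and constant $x=(1\pm\eta)y c_1^{-\beta}$, then using $j=c_1 n(1+o(1))$ and the identity $c_1^{(1+\beta)/2}\sqrt{y c_1^{-\beta}}=\sqrt{c_1 y}$, both bounding quantities become $j + n^{(1+\beta)/2}\bigl(2\sqrt{1\pm\eta}\,\sqrt{c_1 y}+o_p(1)\bigr)$. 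Letting $\eta\downarrow 0$ yields
\be
G_W(j,\fl{yn^\beta}) = j + n^{(1+\beta)/2}\bigl(2\sqrt{c_1 y}+o(1)\bigr)\quad\text{in probability.}
\ee

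Finally, feeding this into the affine relation,
\be
G_Y(j,\fl{yn^\beta}) = \sigma j + \sigma n^{(1+\beta)/2}(2\sqrt{c_1 y}+o(1)) - (\sigma-\mu)\bigl(j+\fl{yn^\beta}-1\bigr) = \mu j + n^{(1+\beta)/2}(2\sigma\sqrt{c_1 y}+o(1)) - O(n^\beta),
\ee
and since $\beta<1$ one has $n^\beta = o\bigl(n^{(1+\beta)/2}\bigr)$, so the last term is absorbed into the error and \eqref{Martin} follows. The main obstacle is the middle step: making the passage from the exact-coordinate form of Theorem~\ref{GlynSepp} to these perturbed coordinates rigorous. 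The sandwich above handles it cleanly, but one must check that the $o_p(1)$ errors produced by the two separate applications of Theorem~\ref{GlynSepp} really are negligible on the scale $n^{(1+\beta)/2}$; this is precisely where the hypotheses $0<c_1<\infty$ and $0<\beta<1$ enter, since they force $\fl{yn^\beta}\to\infty$ and $\fl{yn^\beta}=o(j)$, the regime covered by Theorem~\ref{GlynSepp}.
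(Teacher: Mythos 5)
Your proposal is correct, and it supplies exactly the argument the paper has in mind: the paper omits the proof of this lemma, describing it only as a ``modified version'' of Theorem \ref{GlynSepp}, and your reduction --- standardizing the geometric weights by an affine map (using that every up-right path in $[m]\times[n]$ has $m+n-1$ vertices), sandwiching the second coordinate between $\fl{(1\mp\eta)yc_1^{-\beta}j^{\beta}}$ by monotonicity, applying Theorem \ref{GlynSepp} with first index $j$, and absorbing the $O(n^{\beta})$ correction since $\beta<\tfrac{1+\beta}{2}$ --- is precisely that routine modification. No gaps.
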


We are going to apply \eqref{Martin} in the case of $\beta=2a-1.$

\begin{proof}[Proof of Theorem \ref{thm23}]
Recall that now $1> a > \dfrac{1}{2}.$ Let $c>0$ be a constant to be specified later and set $ m = \fl{np^{-1}- xn^a}$ and $j=m - n+ \fl{(cn)^{2a-1}}.$ Also let $\mu = \dfrac{1}{q}$ the mean and $\sigma = \frac{\sqrt{p}}{q}$ to be the standard deviation of the geometric weights. 

From equation \eqref{defB} we have \be\mathcal{B}_{m,n,j} =\{ L(\fl{p^{-1}n-xn^a},n) \leq n - \fl{(cn)^{2a-1}}\}.\label{mdnght}\ee
Using \eqref{defB} and \eqref{Timo}, we evaluate
\begin{align}
\displaystyle \bP\left\{\mathcal{B}_{m,n,j}\right\} &=\bP\left\{ G(\fl{(cn)^{2a-1}},j) \leq m +\fl{(cn)^{2a-1}} -1\right\} \notag \\ 
                                                    &=\bP\left\{ G(j,\fl{(cn)^{2a-1}}) \leq m +\fl{(cn)^{2a-1}} -1\right\} \label{Refprob}
\end{align} where the second equality follows from  the distributional equality $G(x,y)=_{\mathcal{D}}G(y,x).$ 
Set $\beta = 2a-1 $ and $y= c^{2a-1}=c^{\beta}.$ Then \eqref{Refprob} becomes
\be
\bP\left\{\mathcal{B}_{m,n,j}\right\}=\bP\left\{ G(j,\fl{yn^{\beta}}) \leq \fl{np^{-1}- xn^{\frac{1+\beta}{2}}}+\fl{yn^{\beta}} -1\right\}.
\label{finalprob}
\ee

Observe that $j/n \rightarrow q/p$. Substituting this in \eqref{Martin}, we get the equality in probability
\be
G(j, yn^{\beta}) = p^{-1}n - n^{\frac{1+\beta}{2}}\left(\frac{1}{q}(x-2\sqrt{qy}) +o(1)\right)
\label{defU}
\ee 

Now compare the expression in the probability of \eqref{finalprob} with \eqref{defU}, keeping in mind that $\beta < \frac{1+\beta}{2}.$
We conclude that 
\be
\lim_{n\rightarrow +\infty}\bP\left\{\mathcal{B}_{m,n,j}\right\} = 0  
\ee
if $x > \dfrac{1}{q}(x-2\sqrt{qy}),$ which is equivalent to $y > \frac{(px)^2}{4q}$ as desired. Similarly, if $x < \dfrac{1}{q}(x-2\sqrt{qy})$ 
\be
\lim_{n\rightarrow +\infty}\bP\left\{\mathcal{B}_{m,n,j}\right\} = 1  
\ee
and this gives the other direction.  
\end{proof}


\end{document}